\theoremstyle{plain}
\newtheorem{theorem}{Theorem}
\newtheorem{lemma}{Lemma}
\newtheorem{corollary}{Corollary}
\numberwithin{equation}{section}
\newcommand{\mathmod}[1]{\ \left(\mathrm{mod}\ #1\right)}
\begin{document}
\title{On a theorem of Kanold on odd perfect numbers
\footnotetext{T\MakeLowercase{his preprint is the author's final version of the paper of the same title
published in} \\
\textit{I\MakeLowercase{ndian} J. P\MakeLowercase{ure} A\MakeLowercase{ppl}. M\MakeLowercase{ath}.}, \MakeLowercase{which is available online at \\ \url{https://doi.org/10.1007/s13226-023-00530-y}.}}}
\author{Tomohiro Yamada}
\keywords{odd perfect number}
\subjclass{Primary 11A25, Secondary 11A05.}
\address{Center for Japanese language and culture, Osaka University,
562-8678, 3-5-10, Sembahigashi, Minoo, Osaka, Japan}
\email{tyamada1093@gmail.com}

\date{}

\begin{abstract}
We shall prove that if $N=p^\alpha q_1^{2\beta_1} q_2^{2\beta_2} \cdots q_{r-1}^{2\beta_{r-1}}$
is an odd perfect number
such that $p, q_1, \ldots, q_{r-1}$ are distinct primes, $p\equiv\alpha\equiv 1\mathmod{4}$
and $t$ divides $2\beta_i+1$ for all $i=1, 2, \ldots, r-1$, then $t^5$ divides $N$,
improving an eighty-year old result of Kanold.
\end{abstract}

\maketitle

\section{Introduction}
A positive integer $N$ is called perfect if the sum of divisors of $N$ except $N$ itself is equal to $N$.
In other words, a perfect number is a positive integer $N$ satisfying $\sigma(N)=2N$,
where $\sigma(N)$ denotes the sum of divisors of $N$ including $N$ itself as usual.
It is one of the oldest problem in mathematics whether or not an odd perfect number exists.
Euler showed that an odd perfect number must be of the form
\begin{equation}\label{eq11}
N=p^{\alpha} q_1^{2\beta_1}\cdots q_{r-1}^{2\beta_{r-1}}
\end{equation}
where $q_1, \ldots, q_{r-1}$ and $p$ are distinct odd primes
and $\beta_1, \ldots, \beta_{r-1}$ and $\alpha$ are positive integers with $p\equiv \alpha\equiv 1\mathmod{4}$.
Nielsen proved that $N<2^{4^r}$ in \cite{Nie1} and then $N<2^{(2^r-1)^2}$ in \cite{Nie2}.

However, we do not know a proof (or disproof) of the nonexistence of odd perfect numbers
even of the special form $\beta_1=\beta_2=\cdots =\beta_{r-1}=\beta$ in \eqref{eq11}
with $\beta$ an integer,
although McDaniel and Hagis conjectured that there exists no such one in \cite{MDH}.
A series of papers studying odd perfect numbers of this form
was started by Steuerwald \cite{St}, who showed that $\beta\neq 1$.
Now it is known that $\beta\neq 2$ from Kanold \cite{Kan1},
$\beta\neq 3$ from Hagis and McDaniel \cite{HMD},
$\beta\neq 5, 12, 24, 62$ from Hagis and McDaniel \cite{MDH},
$\beta\neq 6, 8, 11, 14, 18$ from Hagis and McDaniel \cite{CW},
$\beta\not\equiv 1\mathmod{3}$ from McDaniel \cite{Mc},
and $\beta\not\equiv 2\mathmod{5}$ from Fletcher, Nielsen, and Ochem \cite{FNO}.
The author proved that $r\leq 4\beta^2+2\beta+3$ and $N<2^{4^{4\beta^2+2\beta+3}}$
for an odd perfect number $N$ in the form \eqref{eq11} in \cite{Ymd1}.
Later, the author replaced the upper bound for $r$ by
$2\beta^2+8\beta+3$ in \cite{Ymd2} and then $2\beta^2+6\beta+3$ in \cite{Ymd3}.

Indeed, McDaniel proved a slightly stronger result that
if $2\beta_i+1\equiv 0\mathmod{3}$ for all $i=1, 2, \ldots, r-1$ in \eqref{eq11},
then $N$ can never be an odd perfect number.
Similarly, Fletcher, Nielsen, and Ochem \cite{FNO} proved that
an odd integer $N$ in the form \eqref{eq11} can never be perfect if
$2\beta_i+1\equiv 0\mathmod{5}$ for all $i=1, 2, \ldots, r-1$.

Around eighty years ago, Kanold \cite{Kan1} had already proved a general result that
if $N$ in the form \eqref{eq11} is an odd perfect number and
an integer $t$ divides $2\beta_i+1$ for $i=1, 2, \ldots , r-1$, then $t^4$ divides $N$.
The aim of this paper is to strengthen this result.

\begin{theorem}
If $N$ is an odd perfect number in the form \eqref{eq11} and
an integer $t$ divides $2\beta_i+1$ for $i=1, 2, \ldots, r-1$, then $t^5$ divides $N$.
\end{theorem}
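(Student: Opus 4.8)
The plan is to fix a single prime $\ell$ dividing $t$, put $s=v_\ell(t)$, and prove the local statement $v_\ell(N)\ge 5s$; since $t\mid 2\beta_i+1$ means $v_\ell(2\beta_i+1)\ge s$ for every such $\ell$, and $t^5\mid N$ is equivalent to $v_\ell(N)\ge 5s$ for all $\ell\mid t$, multiplicativity reduces the theorem to this one prime at a time. Throughout I would work with the factorization $\sigma(q^{2\beta})=(q^{2\beta+1}-1)/(q-1)=\prod_{d\mid 2\beta+1,\ d>1}\Phi_d(q)$ and lean on two standard facts: every prime divisor of $\Phi_\ell(q)$ is either $\ell$ itself or is $\equiv 1\pmod{\ell}$, and, by the lifting-the-exponent lemma, $v_\ell\!\left((q^{n}-1)/(q-1)\right)=v_\ell(n)$ whenever $q\equiv 1\pmod{\ell}$. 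Note $\ell$ is odd, since each $2\beta_i+1$ is odd, so $2$ plays no role here.

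First I would record that $r\ge 2$ (an odd prime power is never perfect), so at least one $q_i^{2\beta_i}$ occurs. Since $\ell\mid 2\beta_i+1$, the factor $\Phi_\ell(q_i)>1$ divides $\sigma(q_i^{2\beta_i})$ and hence $2N$. If $q_i\equiv 1\pmod{\ell}$ then $\ell\mid\Phi_\ell(q_i)\mid 2N$ gives $\ell\mid N$; otherwise $\Phi_\ell(q_i)$ is a product of primes $\equiv 1\pmod{\ell}$, at least one of which, say a prime $q_{i'}$ with even exponent, divides $N$, and then $\ell\mid\sigma(q_{i'}^{2\beta_{i'}})\mid 2N$ again forces $\ell\mid N$. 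The only delicate point is to exclude the degenerate possibility that all the primes $\equiv 1\pmod{\ell}$ so produced coincide with the exceptional prime $p$; this is ruled out because an odd perfect number has many prime factors. Thus $\ell\mid N$, and I may set $a=v_\ell(N)\ge 1$.

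The engine is the balance obtained by taking $v_\ell$ of $\sigma(N)=2N$: as $\ell$ is odd, $a=\sum_i v_\ell(\sigma(q_i^{2\beta_i}))+v_\ell(\sigma(p^{\alpha}))$. Call a prime divisor of $N$ congruent to $1\pmod{\ell}$ \emph{good}. By the lifting-the-exponent identity each good $q_i$ contributes $v_\ell(2\beta_i+1)\ge s$ to this sum, so it suffices to produce enough good $q_i$. The mechanism I would use is that good primes reproduce: if $q\equiv 1\pmod{\ell}$ divides $N$, a primitive prime divisor of $q^{2\beta_i+1}-1$ is $\equiv 1\pmod{2\beta_i+1}$, hence $\equiv 1\pmod{\ell}$, and divides $N$, feeding a fresh good prime back into the count. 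I would then split into the cases $\ell=p$ and $\ell=q_j$. When $\ell=q_j$ one has $a=2\beta_j$ and, since $\ell\mid 2\beta_j+1$, the clean congruence $a\equiv -1\pmod{\ell}$, together with the fact that then $\ell\mid a+1$ makes a primitive prime divisor of $\ell^{a+1}-1$ good as well. When $\ell=p$ the congruence is unavailable and I would instead extract the powers entirely from the reproduction count, using the Euler conditions $p\equiv\alpha\equiv 1\pmod{4}$ to pin down the exponent $\alpha$ and the parities of the $2\beta_i$ that enter.

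The hard part is the bookkeeping for small $\ell$, namely $\ell=3$ and $\ell=5$, where the crude congruence only reproduces Kanold's four powers: in the case $\ell=q_j$ the relation $a\equiv -1\pmod{\ell}$ with $a$ even yields $a\ge 4$ for $\ell=5$ and merely $a\ge 2$ for $\ell=3$, whereas $\ell\ge 7$ already forces $a\ge\ell-1\ge 6$ and is immediate. Gaining the fifth power therefore requires squeezing one further good prime out of the reproduction mechanism while controlling the finitely many Zsygmondy-exceptional exponents and ruling out coincidences among the generated primitive divisors. This combinatorial tightening, rather than the (routine) valuation computations, is where the improvement over Kanold must come from, and I expect it to be the most delicate step of the argument.
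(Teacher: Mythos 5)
Your reduction to a single prime $\ell$ with $s=v_\ell(t)$, the valuation identity $v_\ell(\sigma(q^{2\beta}))=v_\ell(2\beta+1)$ for $q\equiv 1\pmod{\ell}$, and the ``reproduction'' of good primes via primitive divisors are all sound, and they are essentially the paper's own framework (the paper phrases the same count through the set $S$ of prime factors $\equiv 1\pmod{\ell}$ and the products $\prod_{j=1}^{k}\Phi_{\ell^j}(q_i)\mid\sigma(q_i^{2\beta_i})$, giving $1\le\#S\le 4$ under the assumption $\ell^{5s}\nmid N$). But the proposal stops exactly where the theorem begins. The reproduction mechanism can never produce a fifth good prime by itself: starting from one good prime, the primitive prime divisors of $\Phi_\ell(q_1)$ and $\Phi_{\ell^2}(q_1)$ yield three or four good primes, and no counting argument rules out the \emph{closed} configuration in which $S=\{q_1,q_2,q_3,q_4\}$ and every $\sigma(q_i^{2\beta_i})$ factors entirely over $\ell$ and the other three members of $S$. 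That configuration satisfies every valuation identity you write down; excluding it is the entire content of the improvement over Kanold, and your proposal contains no tool for it. (Also, your remark that coincidence of the produced primes with $p$ is ``ruled out because an odd perfect number has many prime factors'' is not an argument; the paper handles such coincidences by explicit case analysis, or, when $\ell=p$, by noting a prime $\equiv 1\pmod{\ell}$ cannot equal $\ell$.)

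You also misplace the difficulty. The cases $\ell=q_j\in\{3,5\}$ that you flag as the hard part are disposed of in the paper by short explicit factorization chains ($\sigma(3^2)=13$, $\sigma(7^2)=3\times 19$, $\sigma(19^2)=3\times 127$, forcing $\#S\ge 3$ and hence too large an exponent; similarly starting from $\sigma(5^4)=11\times 71$), or even by citing McDaniel and Fletcher--Nielsen--Ochem. The genuinely hard case is $\ell=p$ with $s=2$: there $v_\ell(N)=\alpha\equiv 1\pmod{4}$, Kanold gives $\alpha\ge 8$, so one must exclude precisely $\alpha=9$, which is exactly the closed four-prime configuration with $2\beta_i+1=\ell^2$ for three of the $q_i$ and $\ell^3$ for one. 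The paper spends more than half its length here: it requires Kanold's theorem that the system $\Phi_\ell(q_1^{e_1})=\ell q_2^{f_1}$, $\Phi_\ell(q_2^{e_2})=\ell q_1^{f_2}$ has no relevant solutions (Lemma \ref{K}), a chain of structural lemmas determining which $\Phi_{\ell^j}(q_i)$ equals $\ell$ times which prime powers, growth inequalities playing the sizes of $q_1,\ldots,q_4$ against each other, and a terminal computation for $\ell=5$. Your one sentence for $\ell=p$ (``extract the powers entirely from the reproduction count, using the Euler conditions'') has no counterpart to any of this, and your closing admission that the ``combinatorial tightening'' remains to be done concedes the gap: what is missing is not bookkeeping but the central argument.
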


Our tools are well-known divisibility properties of values of cyclotomic polynomials
and a technical lemma on a certain system of exponential diophantine equations.

We observe that it suffices to prove the theorem for $t=\ell^k$ with $\ell$ prime
and $k$ a positive integer.
Indeed, once we have proved it, we see that if $N$ is an odd perfect number in the form \eqref{eq11} and
an integer $t=\ell_1^{k_1} \cdots \ell_s^{k_s}$ with $\ell_1, \ldots, \ell_k$ distinct primes
divides $2\beta_i+1$ for $i=1, 2, \ldots, r-1$, then $\ell_j^{5k_j}$ divides $N$
for each $j=1, 2, \ldots, s$ and therefore $t^5$ divides $N$ as desired.

Hence, we shall only discuss the case $t=\ell^k$ with $\ell$ prime
and $k$ a positive integer.
This case can be divided into several cases according to the value of $k$ and whether $p=\ell$ or not.
Many cases can be easily settled using well-known divisibility properties of values of cyclotomic polynomials.
The only case that requires elementary but numerous steps is the case $p=\ell$ and $k=2$.
Eventually, this case can be settled by analyzing several exponential diophantine equations
involving primes, completing our proof.

\section{Preliminaries}\label{preliminaries}

For a prime $p$ and an integer $e\geq 0$, we write $p^e\mid x$ if $p^e$ divides $x$ and $p^e\mid\mid x$
if $p^e$ divides $x$ but $p^{e+1}$ does not.
Let $\Phi_d(X)$ denote the $d$-th cyclotomic polynomial
and $o_p(x)$ denote the multiplicative order of an integer $x$ modulo $p$ for a prime $p$.

Now we quote some elementary divisibility properties of numbers of the form $\Phi_d(x)$.
Lemmas \ref{lm1} and \ref{lm2} are well-known.
Lemma \ref{lm1} follows from Theorems 94 and 95 in Nagell \cite{Nag}.
Lemma \ref{lm2} has been proved by Bang \cite{Ban}.
A generalization by Zsigmondy \cite{Zsi} is well-known and
rediscovered by many authors such as Dickson \cite{Dic} and Kanold \cite{Kan2}.

\begin{lemma}\label{lm1}
Let $x$ and $d$ be positive integers.
Then a prime $p$ divides $\Phi_d(x)$ if and only if $d=p^e o_p(x)$
for some integer $e\geq 0$.
Furthermore, if $e\geq 1$, then $p\mid\mid\Phi_d(x)$.
In particular, if a prime $p$ divides $\Phi_d(x)$,
then $d=o_p(x)$ or $p$ is the largest prime divisor of $d$
and, in the latter case, $p\mid\mid\Phi_d(x)$.
\end{lemma}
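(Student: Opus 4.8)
The plan is to reduce everything to $p$-adic valuations of the factorization $x^n-1=\prod_{d\mid n}\Phi_d(x)$. Write $v_p$ for the exponent of $p$ in a nonzero integer. Since $\Phi_d$ divides $X^d-1$ in $\mathbb{Z}[X]$, any prime $p$ with $p\mid\Phi_d(x)$ satisfies $p\mid x^d-1$, so $p\nmid x$ and the order $m:=o_p(x)$ is defined; moreover $m\mid p-1$ by Fermat's theorem, whence $p\nmid m$ and every prime factor of $m$ is smaller than $p$. This last remark already delivers the ``in particular'' clause: in the case $e\geq 1$ the divisor $d=p^e m$ has all its prime factors among $p$ and the (smaller) prime factors of $m$, so $p$ is the largest prime factor of $d$. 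Möbius inversion of $v_p(x^n-1)=\sum_{d\mid n}v_p(\Phi_d(x))$ then gives the identity I will exploit, namely $v_p(\Phi_d(x))=\sum_{g\mid d}\mu(d/g)\,v_p(x^g-1)$.

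I would next determine $v_p(x^n-1)$ as a function of $n$. Since $x^n\equiv 1\mathmod{p}$ holds precisely when $m\mid n$, we have $v_p(x^n-1)=0$ whenever $m\nmid n$. When $m\mid n$, I would invoke the lifting-the-exponent relation for odd $p$: applied to $x^m\equiv 1\mathmod{p}$, and using $p\nmid m$, it yields $v_p(x^n-1)=v_p(x^m-1)+v_p(n)$. Setting $c:=v_p(x^m-1)\geq 1$, this reads $v_p(x^n-1)=c+v_p(n)$ for $m\mid n$ and $v_p(x^n-1)=0$ otherwise.

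Substituting into the Möbius formula finishes the proof. Writing $d=p^a u$ with $p\nmid u$, a divisor $g=p^b v$ of $d$ contributes only when $m\mid v$, and then $\mu(d/g)=\mu(p^{a-b})\mu(u/v)$ vanishes unless $b\in\{a,a-1\}$. For $a\geq 1$ the two surviving layers leave $v_p(\Phi_d(x))=\sum_{v\mid u,\,m\mid v}\mu(u/v)\big[(c+a)-(c+a-1)\big]=\sum_{v\mid u,\,m\mid v}\mu(u/v)$, and the substitution $v=mv'$ turns this into $\sum_{v'\mid u/m}\mu((u/m)/v')$, which equals $1$ if $u=m$ and $0$ otherwise. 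For $a=0$ the same sum, now weighted by $c$, gives $c$ if $u=m$ and $0$ otherwise. Hence $v_p(\Phi_d(x))$ equals $c$ when $d=m$, equals $1$ when $d=p^e m$ with $e\geq 1$, and is $0$ for every other $d$. This is exactly the equivalence $p\mid\Phi_d(x)\iff d=p^e o_p(x)$ with $e\geq 0$, together with $p\mid\mid\Phi_d(x)$ in the case $e\geq 1$.

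The crux is the lifting-the-exponent step, which carries the only genuine arithmetic content: that adjoining one factor of $p$ to the exponent raises $v_p(x^n-1)$ by exactly one. I would prove this by the standard telescoping argument, first checking $v_p(x^{mt}-1)=v_p(x^m-1)$ for $p\nmid t$ via the cofactor $x^{m(t-1)}+\cdots+1\equiv t\mathmod{p}$, and then, writing $x^m=1+p^c w$ with $p\nmid w$, expanding $(x^m)^p-1$ by the binomial theorem to read off that its valuation is $c+1$. Here the hypothesis that $p$ is odd is essential—the term $\binom{p}{2}p^{2c}w^2$ is harmless only then—and indeed the sharp divisibility $p\mid\mid\Phi_{p^e m}(x)$ can fail for $p=2$ (for instance $\Phi_2(3)=4$); since $N$ is odd, this case never arises in our application.
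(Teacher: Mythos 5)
Your proof is correct, and it takes a genuinely different route from the paper, which does not prove Lemma \ref{lm1} at all but cites Theorems 94 and 95 of Nagell \cite{Nag}, whose classical arguments proceed by order and congruence considerations rather than by valuations. Your combination of M\"obius inversion of $v_p(x^n-1)=\sum_{d\mid n}v_p(\Phi_d(x))$ with the lifting-the-exponent evaluation $v_p(x^n-1)=c+v_p(n)$ for $m\mid n$ (where $m=o_p(x)$ and $c=v_p(x^m-1)$) is self-contained, and the layer-by-layer computation of the divisor sum is carried out correctly; it actually proves more than the lemma asks, giving the exact valuation $v_p(\Phi_d(x))$ for every $d$ (equal to $c$ at $d=m$, to $1$ at $d=p^em$ with $e\geq 1$, and to $0$ otherwise), from which the equivalence, the sharp divisibility, and the ``in particular'' clause (via $m\mid p-1$, so all prime factors of $m$ are below $p$) fall out simultaneously. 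Two remarks. First, your restriction to odd $p$ is not a defect but a necessary correction: as your example $\Phi_2(3)=4$ shows, the sharp divisibility claimed in the lemma for $e\geq 1$ is literally false at $p=2$, $d=2$ when $x\equiv 3\mathmod{4}$, and since every prime to which the paper applies this lemma divides the odd number $N$, nothing is lost in the application. Second, a one-line caveat would tidy up the degenerate case $x=1$, which the statement's ``positive integers'' permits but which makes your valuations $v_p(x^g-1)$ undefined; there the lemma is checked directly from $\Phi_1(1)=0$ and $\Phi_d(1)=p$ or $1$ according as $d$ is or is not a power of $p$. Neither point is a gap in the mathematics you wrote; the argument as given is sound for the odd primes and bases $x\geq 2$ that the paper ever uses.
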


\begin{lemma}\label{lm2}
If $a$ and $d$ are integers greater than $1$, then $\Phi_d(a)$ has
a prime factor $p$ such that $d=o_p(a)$,
which we shall call a \textit{primitive prime factor},
unless $(a, d)=(2, 6)$ or $d=2$ and $a+1$ is a power of $2$.
\end{lemma}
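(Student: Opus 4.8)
My proof of Lemma \ref{lm2} will recast it as a statement about the size of $\Phi_d(a)$ and then dispose of a few small exceptions by hand. The key reduction comes from Lemma \ref{lm1}: a prime $p$ is a primitive prime factor of $\Phi_d(a)$ precisely when $o_p(a)=d$, and Lemma \ref{lm1} tells us that every prime factor of $\Phi_d(a)$ is either such a primitive one or else equals the largest prime divisor $P$ of $d$, in which case $P\mid\mid\Phi_d(a)$. The single caveat is $p=2$, where the conclusion $P\mid\mid\Phi_d(a)$ can fail; this happens only for $d=2$ (for $d=2^m$ with $m\ge 2$ one still has $\Phi_d(a)=a^{2^{m-1}}+1\equiv 2\mathmod 4$ when $a$ is odd, and $2\nmid\Phi_d(a)$ when $d$ has an odd prime factor), so I will treat $d=2$ separately at the outset. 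For $d=2$ we have $\Phi_2(a)=a+1$, whose primitive prime factors are exactly the odd primes dividing $a+1$; these are present unless $a+1$ is a power of $2$, which gives the second exceptional family of the lemma.

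For $d\ge 3$ the reduction is clean. Since $\Phi_d(a)\ge 3>1$ and its only admissible non-primitive prime factor $P$ occurs to the first power, a lack of primitive prime factors forces all prime factors of $\Phi_d(a)$ to equal $P$, whence $\Phi_d(a)=P$. Conversely, if $\Phi_d(a)=P$ then $o_P(a)\mid P-1\le d-1<d$, so $P$ is indeed non-primitive and there is no primitive prime factor. Thus the whole lemma reduces, for $d\ge 3$, to solving the single equation $\Phi_d(a)=P$ with $a\ge 2$ and $P$ the largest prime factor of $d$.

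To control this equation I will bound $\Phi_d(a)$ from below, and since $P\le d$ it suffices to prove $\Phi_d(a)>d$ for all but finitely many $(a,d)$. For $a\ge 3$ the elementary estimate $\Phi_d(a)\ge (a-1)^{\phi(d)}\ge 2^{\phi(d)}$ already yields $\Phi_d(a)>d$ whenever $2^{\phi(d)}>d$, which holds for every $d\ge 3$ outside the short borderline list $d\in\{4,6\}$; those two are settled at once from $\Phi_4(a)=a^2+1$ and $\Phi_6(a)=a^2-a+1$. The genuinely delicate range is $a=2$, where $(a-1)^{\phi(d)}=1$ is useless. Here I instead estimate $\Phi_d(2)=\prod_{\zeta}\lvert 2-\zeta\rvert$ over the primitive $d$-th roots of unity $\zeta$, using that only the roots nearest $1$ contribute factors close to $1$ while the remaining factors are bounded below by a constant exceeding $1$; this forces $\Phi_d(2)>d$ for all sufficiently large $d$, leaving only finitely many $d$ to examine.

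It then remains to check the surviving pairs directly. For $a=2$ one computes $\Phi_3(2)=7$, $\Phi_4(2)=5$, and $\Phi_5(2)=31$, each exceeding its largest prime $P$, whereas $\Phi_6(2)=3=P$ has no primitive prime factor; this produces the first exceptional pair $(a,d)=(2,6)$. Together with the $d=2$ discussion these are exactly the exceptions asserted, completing the proof. The main obstacle is the lower bound for $\Phi_d(2)$ in the third step: the naive estimate collapses at $a=2$, so one must argue with the archimedean sizes of the numbers $2-\zeta$ (equivalently, with the Möbius factorization $\Phi_d(a)=\prod_{c\mid d}(a^c-1)^{\mu(d/c)}$) to push $\Phi_d(2)$ past $d$.
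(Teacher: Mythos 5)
First, a point of comparison: the paper does not prove this lemma at all --- it is quoted as Bang's theorem, with references to Bang, Zsigmondy, Dickson, and Kanold. So there is no internal proof to match against, and what you have written is the classical argument for Bang's theorem. Its skeleton is sound: the reduction via Lemma \ref{lm1} is correct, including your isolation of the $p=2$ caveat (the failure of $p\mid\mid\Phi_d(a)$ occurs only at $d=2$, since $\Phi_{2^m}(a)\equiv 2\ \mathrm{mod}\ 4$ for $m\ge 2$ and odd $a$), the disposal of $d=2$ recovering the exceptional family ``$a+1$ a power of $2$,'' the equivalence for $d\ge 3$ of ``no primitive prime factor'' with the single equation $\Phi_d(a)=P$ where $P$ is the largest prime factor of $d$ (using $\Phi_d(a)>1$ and $P\mid\mid\Phi_d(a)$ in one direction, $o_P(a)\mid P-1<d$ in the other), and the bound $\Phi_d(a)\ge (a-1)^{\phi(d)}\ge 2^{\phi(d)}>d$ for $a\ge 3$ with the borderline cases $d=4,6$ done by hand.

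The genuine soft spot is the step you yourself flag: at $a=2$ you assert $\Phi_d(2)>d$ only ``for all sufficiently large $d$'' and then verify just $d\in\{3,4,5,6\}$. Without an explicit threshold this finite check is unjustified, and in fact the natural effective version of your own suggestion leaves more survivors than you examined. Concretely, the M\"obius factorization gives $\Phi_d(2)=2^{\phi(d)}\prod_{c\mid d}(1-2^{-c})^{\mu(d/c)}\ge 2^{\phi(d)}\prod_{j\ge 1}(1-2^{-j})>0.288\cdot 2^{\phi(d)}$, since the factors with $\mu(d/c)=-1$ only help. This beats $d$ once $\phi(d)\ge 6$ (the largest $d$ with $\phi(d)=6$ is $18$, and $0.288\cdot 2^6\approx 18.5>18$; for $\phi(d)\ge 8$ use $\phi(d)\ge\sqrt{d}$ for $d>6$), so the surviving cases are exactly $\phi(d)\in\{2,4\}$, i.e.\ $d\in\{3,4,5,6,8,10,12\}$ --- and you omitted $d=8,10,12$. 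They do pass ($\Phi_8(2)=17$, $\Phi_{10}(2)=11$, $\Phi_{12}(2)=13$, each a primitive prime), so the repair is routine, but as written the proof has a quantitative hole precisely where the whole difficulty of Bang's theorem lives.
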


The following corollary immediately follows from Lemma \ref{lm1}.

\begin{corollary}\label{cor1}
If a prime $p$ divides both $\Phi_k(a)$ and $\Phi_\ell(a)$ with $\ell>k$,
then $\ell=p^e k$ for some integer $e\geq 1$
and $p\mid\mid\Phi_\ell(a)$.
\end{corollary}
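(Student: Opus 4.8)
The plan is simply to apply Lemma~\ref{lm1} to each of $\Phi_k(a)$ and $\Phi_\ell(a)$ and then compare the resulting factorizations of $k$ and $\ell$. Writing $m=o_p(a)$ for the multiplicative order of $a$ modulo $p$, the key observation I would stress at the outset is that $m$ depends only on the pair $(p,a)$ and not on the index of the cyclotomic polynomial; thus the \emph{same} integer $m$ will appear in the descriptions of both $k$ and $\ell$.

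First I would record the two applications of Lemma~\ref{lm1}. Since $p\mid\Phi_k(a)$, we have $k=p^{e_1}m$ for some integer $e_1\geq 0$; since $p\mid\Phi_\ell(a)$, we have $\ell=p^{e_2}m$ for some integer $e_2\geq 0$. The hypothesis $\ell>k$ then forces $p^{e_2}m>p^{e_1}m$, hence $p^{e_2}>p^{e_1}$ and so $e_2>e_1$; in particular $e_2\geq 1$. Dividing the two expressions gives $\ell/k=p^{e_2-e_1}$, that is $\ell=p^e k$ with $e=e_2-e_1\geq 1$, which is the first assertion.

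For the exactness claim I would invoke the ``furthermore'' clause of Lemma~\ref{lm1}: because $\ell=p^{e_2}m$ with $e_2\geq 1$, that clause yields $p\mid\mid\Phi_\ell(a)$ directly. There is essentially no obstacle here — the corollary is a one-line consequence of the lemma — and the only point requiring care is the observation that $o_p(a)$ is intrinsic to $(p,a)$, so that $k$ and $\ell$ share the same prime-to-$p$ part and differ only in their powers of $p$.
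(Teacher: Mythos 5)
Your proof is correct and is exactly the argument the paper intends: the paper gives no separate proof, stating only that the corollary ``immediately follows from Lemma \ref{lm1},'' and your write-up (both indices equal $p^{e_i}o_p(a)$ with the same order $m=o_p(a)$, so $\ell/k=p^{e_2-e_1}$ with $e_2>e_1\geq 0$, and the ``furthermore'' clause with $e_2\geq 1$ gives $p\mid\mid\Phi_\ell(a)$) is the natural spelling-out of that one-line deduction.
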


We use the following technical lemma on a certain system of two exponential diophantine equations
obtained by Kanold \cite{Kan3}.
\begin{lemma}\label{K}
If three primes $\ell, q_1, q_2$ and four positive integers $e_i, f_i (i=1, 2)$ satisfy two relations
$\Phi_\ell(q_1^{e_1})=\ell q_2^{f_1}$ and $\Phi_\ell(q_2^{e_2})=\ell q_1^{f_2}$ simultaneously,
then $\ell=2$ and $(q_1^{e_1}, q_2^{e_2})=(3^2, 5)$ or $(5, 3^2)$.
\end{lemma}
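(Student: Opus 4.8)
The plan is to read both equations through the factorization $\Phi_\ell(X)=(X^\ell-1)/(X-1)$ and to extract strong arithmetic constraints before attacking the Diophantine core. First I would apply Lemma~\ref{lm1} to $\Phi_\ell(q_1^{e_1})=\ell q_2^{f_1}$: since $\ell$ is prime, every prime divisor $p$ of $\Phi_\ell(q_1^{e_1})$ satisfies either $o_p(q_1^{e_1})=\ell$, whence $p\equiv 1\mathmod{\ell}$, or else $p=\ell$ with $\ell\mid\mid\Phi_\ell(q_1^{e_1})$, the latter occurring exactly when $q_1^{e_1}\equiv 1\mathmod{\ell}$. Matching this against the right-hand side forces $q_2\neq\ell$, $q_2\equiv 1\mathmod{\ell}$, and $q_1^{e_1}\equiv 1\mathmod{\ell}$; the symmetric reading of the second equation gives $q_1\neq\ell$, $q_1\equiv 1\mathmod{\ell}$, and $q_2^{e_2}\equiv 1\mathmod{\ell}$. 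Reducing $\Phi_\ell(q_1^{e_1})$ modulo $q_1$ shows $\Phi_\ell(q_1^{e_1})\equiv 1\mathmod{q_1}$, so $q_1\nmid\ell q_2^{f_1}$ and hence $q_1\neq q_2$; thus $\ell,q_1,q_2$ are pairwise distinct, and by Lemma~\ref{lm2} the prime $q_2$ is precisely the primitive prime factor of $\Phi_\ell(q_1^{e_1})$ and $q_1$ that of $\Phi_\ell(q_2^{e_2})$.

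Next I would record the elementary estimate $A^{\ell-1}<\Phi_\ell(A)<2A^{\ell-1}$ valid for $A\geq 2$, and feed in $A=q_1^{e_1}$ and $B=q_2^{e_2}$. This yields the narrow windows $q_1^{e_1(\ell-1)}/\ell<q_2^{f_1}<q_1^{e_1(\ell-1)}$ and $q_2^{e_2(\ell-1)}/\ell<q_1^{f_2}<q_2^{e_2(\ell-1)}$. Multiplying the two upper bounds gives $f_1 f_2<(\ell-1)^2 e_1 e_2$, while the lower bounds give an almost matching inequality, so the two products differ by at most a quantity of size $O(\ell(e_1+e_2))$. I would also note a useful rigidity: were all four exponents equal to $1$, then subtracting $\Phi_\ell(q_1)=\ell q_2$ from $\Phi_\ell(q_2)=\ell q_1$ and using that $\Phi_\ell$ is strictly increasing would force $q_1=q_2$, a contradiction; hence any solution must carry an exponent exceeding $1$.

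The heart of the argument, and what I expect to be the main obstacle, is to bound the exponents $e_i,f_i$ together with $\ell$. The decisive constraint is that $\Phi_\ell(A)/\ell=q_2^{f_1}$ and $\Phi_\ell(B)/\ell=q_1^{f_2}$ are prime powers; this is a Nagell--Ljunggren-type condition, and the range $f_i\geq 2$, in which a value of $\Phi_\ell$ is a perfect power up to the single factor $\ell$, is extremely restrictive and should survive only for a short explicit list of small parameters. Once the exponents and $\ell$ are confined to small values, the odd-$\ell$ possibilities are eliminated by a short finite check aided by the monotonicity observation of the previous paragraph, so that only $\ell=2$ remains.

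This leaves $\ell=2$, the genuinely computational case, where the asserted solution lives. Here $\Phi_2(X)=X+1$ and the system reads $q_1^{e_1}+1=2q_2^{f_1}$ and $q_2^{e_2}+1=2q_1^{f_2}$ with $q_1,q_2$ distinct odd primes. I would exploit the exact two-adic valuation of $a^n+1$: since the right-hand sides have two-adic valuation $1$, each left-hand side must as well, forcing $e_1$ to be even or $q_1\equiv 1\mathmod{4}$, and symmetrically for $e_2$ and $q_2$. Combining these parity restrictions with the size window $q_2^{f_1}<q_1^{e_1}<2q_2^{f_1}$ and its mirror to squeeze the exponents, the only surviving simultaneous solution is $(q_1^{e_1},q_2^{e_2})=(3^2,5)$, for which indeed $3^2+1=2\cdot 5$ and $5+1=2\cdot 3$, together with its mirror $(5,3^2)$, completing the proof.
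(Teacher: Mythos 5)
First, a point of context: the paper itself does not prove Lemma~\ref{K} at all; it is quoted from Kanold \cite{Kan3}, whose proof is a self-contained piece of work of several pages. So your task was in effect to reconstruct Kanold's theorem, and your write-up does not do that: the preliminary reductions are correct, but both places where the real difficulty lives are asserted rather than proved. What you get right is the easy part: the congruences $q_1\equiv q_2\equiv 1\mathmod{\ell}$ and $q_1^{e_1}\equiv q_2^{e_2}\equiv 1\mathmod{\ell}$ via Lemma~\ref{lm1}, the pairwise distinctness of $\ell,q_1,q_2$, the bounds $A^{\ell-1}<\Phi_\ell(A)<2A^{\ell-1}$, and the monotonicity contradiction when all four exponents equal $1$. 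The first genuine gap is the elimination of odd $\ell$. You write that the condition ``$\Phi_\ell(A)/\ell$ is a prime power'' is a Nagell--Ljunggren-type condition that is ``extremely restrictive and should survive only for a short explicit list of small parameters.'' That is not an argument: the Nagell--Ljunggren problem (classifying solutions of $(x^n-1)/(x-1)=y^q$) is unresolved in general, so there is no theorem to invoke; worse, the condition is vacuous exactly when $f_i=1$, which is the generic situation (values $\Phi_\ell(q^e)=\ell\times\mathrm{prime}$ abound, e.g.\ $\Phi_3(7)=3\cdot 19$) and is precisely what happens in the actual solution $(3^2,5)$, where $f_1=f_2=1$. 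Nothing in your sketch plays the two equations against each other for odd $\ell$, and that interplay is the entire content of the lemma.

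The second gap is the $\ell=2$ endgame, which you claim follows from the $2$-adic parity restriction $q_i^{e_i}\equiv 1\mathmod{4}$ combined with the size windows. This cannot work, because those two constraints do not even come close to pinning down the solutions of a \emph{single} equation: the Pell equation $x^2+1=2y^2$ supplies infinitely many prime-power solutions of $q_1^{e_1}+1=2q_2^{f_1}$, for instance $7^2+1=2\cdot 5^2$, $41^2+1=2\cdot 29^2$, and $239^2+1=2\cdot 13^4$, every one of which satisfies your parity condition and (trivially, being an exact solution) your size window. Ruling these out requires using the \emph{second} equation in an essential, quantitative way --- e.g.\ congruences modulo powers of $q_1$ and $q_2$ extracted from the simultaneous system --- and your proposal never does so beyond the all-exponents-one case. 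So the proposal is a correct set-up followed by two unproven claims exactly where Kanold's work would have to be reproduced; as it stands it is not a proof.
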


\section{Proof of the theorem}

As observed in the Introduction, it suffices to prove the theorem for $t=\ell^k$ with $\ell$ prime.
To this end, we assume that $N$ is an odd perfect number in the form \eqref{eq11}
and, for a prime $\ell$ and a positive integer $k$, $\ell^k$ divides $2\beta_i+1$ for all $i=1, 2, \ldots, r-1$
but $\ell^{5k}$ does not divide $N$.
We note that $\ell^{4k}$ divides $N$ from Kanold \cite{Kan1}.
Moreover, Kanold's proof yields that $q_i\equiv 1\mathmod{\ell}$ for some $q_i$.

Let $S$ denote the set of prime factors $q_i$ of $N$ such that $q_i\equiv 1\mathmod{\ell}$.
For each $q_i$ in $S$, $\ell$ divides $\Phi_{\ell^j}(q_i)$ for any integer $j\geq 1$
and therefore $\ell^k\mid \prod_{j=1}^k\Phi_{\ell^j}(q_i)\mid \sigma(q_i^{2\beta_i})$.
Thus, we observe that
\begin{equation}\label{eq31}
1\leq \#S\leq 4.
\end{equation}

We begin with settling relatively easy cases.

\begin{lemma}\label{A}
Let $\ell$ be a prime and $k$ be a positive integer.
If $\ell^k$ divides $2\beta_i+1$ for all $i=1, 2, \ldots, r-1$ but $\ell^{5k}$ does not divide $N$,
then (a) $\ell\neq q_i$ for all $i=1, 2, \ldots, r-1$ and (b) $\ell\neq p$ unless $k=2$.
\end{lemma}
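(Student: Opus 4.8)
The plan is to control everything through the exponent of $\ell$ in $\sigma(N)=2N$; write $v_\ell(x)$ for this exponent. Since $\ell$ divides the odd number $2\beta_i+1$, the prime $\ell$ is odd, so $v_\ell(N)=v_\ell(2N)=v_\ell(\sigma(N))$, and the standing hypotheses give $4k\le v_\ell(N)\le 5k-1$. Using multiplicativity of $\sigma$ and Lemma \ref{lm1}, I would evaluate $v_\ell(\sigma(N))$ term by term: a prime power $\ell^e$ contributes $0$ because $\sigma(\ell^e)\equiv 1\mathmod{\ell}$; a prime $q\in S$ contributes $v_\ell(\sigma(q^{2\beta}))=v_\ell(2\beta+1)\ge k$, since for $q\equiv 1\mathmod{\ell}$ the only relevant factors are the $\Phi_{\ell^j}(q)$, each with $\ell\mid\mid\Phi_{\ell^j}(q)$; and a prime $q\not\equiv 1\mathmod{\ell}$ contributes only through $\Phi_d(q)$ with $d=\ell^e o_\ell(q)$, which can divide the odd number $2\beta+1$ only when $o_\ell(q)$ is odd. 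In particular, for $\ell\in\{3,5\}$ every such $o_\ell(q)$ is even, so only primes of $S$ (and possibly $p$) contribute. This bookkeeping, together with $1\le\#S\le 4$ from \eqref{eq31}, is the engine for both parts.

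For part (a), suppose $\ell=q_{i_0}$. Since $\sigma(\ell^{2\beta_{i_0}})\equiv 1\mathmod{\ell}$ contributes nothing, the whole power of $\ell$ in $N$ comes from $q_{i_0}$ itself, so $v_\ell(N)=2\beta_{i_0}$ is even. From $\ell^k\mid 2\beta_{i_0}+1$ we get $\ell^k\le 2\beta_{i_0}+1\le 5k$, which leaves only $(\ell,k)\in\{(3,1),(5,1),(3,2)\}$, and in each the unique admissible even value of $v_\ell(N)$ in $[4k,5k-1]$ is $2\beta_{i_0}=4,4,8$ respectively. The pair $(3,1)$ dies at once, since then $2\beta_{i_0}+1=5$ is not divisible by $3$. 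For $(5,1)$ and $(3,2)$ I would compute $\sigma(\ell^{2\beta_{i_0}})$ explicitly, namely $\sigma(5^4)=11\cdot 71$ and $\sigma(3^8)=13\cdot 757$, and note that every listed prime is $\equiv 1\mathmod{\ell}$, so (as at most one of them can be the single prime $p$) they populate $S$; feeding these primes back through the $\Phi_{\ell^j}$ factors of their own divisor sums produces, by the primitive-prime guarantee of Lemma \ref{lm2}, still more primes $\equiv 1\mathmod{\ell}$ dividing $N$.

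For part (b) I would assume $\ell=p$. Then $\sigma(\ell^\alpha)\equiv 1\mathmod{\ell}$ again contributes nothing, so the whole power $\ell^\alpha\mid\mid N$ must be manufactured by the $q_j$, where now $\alpha\equiv 1\mathmod{4}$ and $4k\le\alpha\le 5k-1$. When $k=1$ this forces $\alpha=4$, contradicting $\alpha\equiv 1\mathmod{4}$, so $k=1$ is impossible. When $k\ge 3$ the congruence alone no longer excludes a valid $\alpha$, and I would instead track, exactly as in part (a), the primes $\equiv 1\mathmod{\ell}$ that the factors $\Phi_{\ell^j}(q)$ force into $N$; the remaining possibility $k=2$ is precisely the hard case left for later.

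The hard part is common to the surviving subcases $(5,1)$ and $(3,2)$ of (a) and to $k\ge 3$ in (b): each contributing prime adds at least $k$ to a valuation that is at most $5k-1$, so there are at most four of them, but this counting leaves room for internally consistent configurations and cannot close these cases by itself. The crux is to show that the descent through primitive prime factors cannot be sustained within a set of at most four primes of $S$. I expect the decisive step to be arranging the bookkeeping so that, instead of an unbounded descent, one is forced into the rigid pair of equations $\Phi_\ell(q_1^{e_1})=\ell q_2^{f_1}$ and $\Phi_\ell(q_2^{e_2})=\ell q_1^{f_2}$ for two primes $q_1,q_2\in S$; since Lemma \ref{K} permits this only when $\ell=2$, and our $\ell$ is odd, the contradiction follows. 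Producing exactly this configuration, rather than a descent that merely appears to run forever, is the delicate point.
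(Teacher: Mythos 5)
Your valuation framework is sound, and the easy cases do go through: the reduction of (a) to $(\ell,k)\in\{(3,1),(5,1),(3,2)\}$ via $\ell^k\le 2\beta_{i_0}+1\le 5k$ is correct, your dispatch of $(3,1)$ using Kanold's lower bound $4k\le v_\ell(N)$ is actually cleaner than the paper's (which runs explicit factorization chains for $\ell=3$, $k=1$), and your $k=1$ argument in (b) is the paper's Case III verbatim. But there is a genuine gap, which you yourself flag: the cases $(5,1)$ and $(3,2)$ in (a) and $k\ge 3$ in (b) are only sketched, never closed, and the tool you expect to close them --- Lemma \ref{K} --- is not the one that works. In the paper, Lemma \ref{K} is reserved entirely for the excluded case $\ell=p$, $k=2$; it plays no role in the proof of this lemma.

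Concretely, your claim that counting ``cannot close these cases by itself'' is false for (b) with $k\ge 4$: since $p=\ell$, a primitive prime factor of $\Phi_{\ell^j}(q_i)$ for $q_i\in S$ and $j=1,2,3,4$ is $\equiv 1\mathmod{\ell^j}$, hence is distinct for distinct $j$, cannot equal $p$, and so must lie in $S$; together with $q_i$ this gives $\#S\ge 5$, contradicting \eqref{eq31}. The point you miss is that when $p=\ell$ the primitive primes have nowhere to escape to --- the ``cycling back'' and ``absorbed into $p$'' degeneracies you worry about cannot occur for four cyclotomic values attached to a \emph{single} base $q_i$. For $k=3$ this count only yields $\#S=4$, and the paper's extra ingredient is not Lemma \ref{K} but the congruence $\alpha\equiv 1\mathmod{4}$: after showing via Lemma \ref{lm1} that every factor $\ell$ of $\sigma(N)$ comes from $S$, if $\ell^3\mid\mid 2\beta_i+1$ for all four members then $v_\ell(N)=\alpha=12$, impossible mod $4$, so some $2\beta_j+1$ is divisible by $\ell^4$ and the previous count gives a fifth prime. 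For (a) with $\ell=5$ your abstract descent genuinely can stall --- a primitive prime of $\Phi_5(q)$ may coincide with another member of $S$ or with $p$, and Lemma \ref{K} says nothing about configurations such as $\Phi_5(11)=5\cdot 71^a p^b$, since it has no room for a third prime --- so the paper closes it by explicit arithmetic: $11,71\equiv 3\mathmod{4}$ forces $11,71\in S$; $\Phi_5(71)=5\cdot 11\cdot 211\cdot 2221$ forces $211\in S$; and $\Phi_5(11)=5\cdot 3221$, $\Phi_5(211)=5\cdot 1361\cdot 292661$ then produce further primes $\equiv 1\mathmod{5}$, at most one of which can be $p$, whence $\#S\ge 5$ against \eqref{eq31}. (Your isolation of $(3,2)$ is perceptive --- the paper's Case II inequality $\ell^k-1\ge 5^k-1$ tacitly assumes $\ell\ge 5$, and $\ell=3$ is really covered by the cited result of McDaniel that $3\mid 2\beta_i+1$ for all $i$ is impossible --- but you do not supply an argument for it either.) So: a good skeleton with two cases done, but the load-bearing cases are missing, and the device you nominate for them is the wrong one.
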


\begin{proof}
We divide into five cases: (I) $\ell=q_i$, $k=1$, (II) $\ell=q_i$, $k\geq 2$, (III) $\ell=p$, $k=1$,
(IV) $\ell=p$, $k\geq 4$, and (V) $\ell=p$, $k=3$.

\textbf{Case-I $\ell=q_i$, $k=1$.}
If $k=1$ and $\ell=q_i$ for some $i$, then $\ell^{2\beta_i}$ divides $N$ with $2\beta_i\geq \ell-1$.
If $\ell\geq 7$, then $\ell^6$ divides $N$, which contradicts the assumption.

If $q_i=\ell=3$, then, since $3$ divides $2\beta_i+1$ but $2\beta_i<5k=5$,
we must have $2\beta_i=2$.
We observe that $\sigma(3^2)=13$ divides $N$.
If $p=13$, then $7, 19, 127\in S$ since $(p+1)/2=7$, $\sigma(7^2)=3\times 19$,
and $\sigma(19^2)=3\times 127$.
Similarly, if $p\neq 13$, then we must have either $13, 61, 97\in S$,
$p=61$ and $13, 31, 331\in S$, or $p=97$ and $13, 61, 7\in S$.
Hence, $\#S\geq 3$ and $2\beta_i\geq 3$, which is a contradiction.

If $q_i=\ell=5$, then we must have $2\beta_i=4$ like above.
We observe that $11, 71, 211\in S$ since $\sigma(5^4)=11\times 71$,
$\sigma(11^4)=5\times 3221$,
$\sigma(71^4)=5\times 11\times 211\times 2221$,
and $\sigma(211^4)=5\times 1361\times 292661$.
Moreover, at least two primes $q$ of $1361$, $2221$, $3221$, $292661$
belong to $S$ and therefore $\#S\geq 5$, contrary to \eqref{eq31}.

We note that the impossibilities of $\ell=3$ and $\ell=5$ immediately follow from
McDaniel's result in \cite{Mc} and a result of Fletcher, Nielsen, and Ochem \cite{FNO} mentioned
in the Introduction respectively.
However, in our cases, we need only a small amount of computation compared to their arguments.

\textbf{Case-II $\ell=q_i$, $k\geq 2$.}
If $k\geq 2$ and $\ell=q_i$ for some $i$, then $\ell^{2\beta_i}$ divides $N$
with $2\beta_i\geq \ell^k-1\geq 5^k-1\geq 5k$.
Hence, $\ell^{5k}$ divides $N$, contrary to the assumption.

\textbf{Case-III $\ell=p$, $k=1$.}
If $k=1$, then $p^4$ divides $N$ and therefore $\alpha\geq 4$.
Since $\alpha\equiv 1\mathmod{4}$, $\alpha\geq 5$ and $p^5=\ell^{5k}$ divides $N$,
which is a contradiction.

\textbf{Case-IV $\ell=p$, $k\geq 4$.}
By \eqref{eq31}, we can take a prime factor $q_i\equiv 1\mathmod{\ell}$.
It follows from Lemma \ref{lm2} that each $\Phi_{\ell^j}(q_i) ~ (j=1, 2, 3, 4)$ has a primitive prime factor,
which must belong to $S$ since $p=\ell$.
Thus, $\#S\geq 5$, which contradicts \eqref{eq31}.

\textbf{Case-V $\ell=p$, $k=3$.}
Let $q_i$ be a prime factor $\equiv 1\mathmod{\ell}$.
Like above, each $\Phi_{\ell^j}(q_i) ~ (j=1, 2, 3)$ has a primitive prime factor.
Thus, we observe that $\#S\geq 4$ and, together with \eqref{eq31},
\begin{equation}\label{eq32}
\#S=4.
\end{equation}

Assume that there exists an index $j$ such that $q_j\not\equiv 1\mathmod{\ell}$
but $\ell$ divides $\sigma(q_j^{2\beta_j})$.
Let $d$ be the multiplicative order of $q_j\mathmod{\ell}$.
It follows from Lemma \ref{lm1} that $d\ell^2$ divides $2\beta_j+1$, $\gcd(d, \ell)=1$ and
$\ell$ divides $\Phi_d(q_j)$, $\Phi_{d\ell}(q_j)$, and $\Phi_{d\ell^2}(q_j)$.
Hence, $\ell^3$ divides $\sigma(q_j^{2\beta_j})$ and, together with \eqref{eq32},
we see that $\ell^{5k}$ divides $\prod_{q_i\in S\cup\{q_j\}}\sigma(q_j^{2\beta_j})$
and then $N$, which is a contradiction.
Thus, we see that if $\ell$ divides $\sigma(q_i^{2\beta_i})$, then $q_i\in S$.

If $\ell^3\mid\mid(2\beta_i+1)$ for all $q_i$ in $S$,
then $\ell^{12}\mid\mid N$, which is impossible since $\alpha\equiv 1\mathmod{4}$.
Hence, there must exist a prime factor $q_j$ in $S$ such that $\ell^4$ divides $\sigma(q_j^{2\beta_j})$.
Lemma \ref{lm1} yields that $\ell^4$ divides $2\beta_j+1$.
each $\Phi_{\ell^i}(q_j) ~ (i=1, 2, 3, 4)$ has a primitive prime factor.
Thus, $\#S\geq 5$, which contradicts \eqref{eq32}.
\end{proof}

Now the only case that remains is the case $\ell=p$ and $k=2$,
which is the most difficult case.

Since we have assumed that $\ell^{5k}$ does not divide $N$, we see that
$8=4k\leq \alpha\leq 5k-1=9$ and therefore we must have $\alpha=9$.
This will eventually turn out to be impossible after several steps.

By \eqref{eq31}, we can take a prime factor $q_i$ in $S$
and each $\Phi_{\ell^j}(q_i) ~ (j=1, 2)$ has a primitive prime factor.
Thus, $N$ has at least three prime factors $q_i\equiv 1\mathmod{\ell}$.
The following three lemmas yield that $S$ consists exactly four primes and $2\beta_i+1=\ell^3$
for exactly one prime $q_i$ in $S$ and $2\beta_i+1=\ell^2$ for the other three in $S$.

\begin{lemma}\label{b}
$N$ has no prime factor $q_j$ of $N$ such that
the multiplicative order $d$ of $q_j\mathmod{\ell}$ divides $2\beta_j+1$ and $d>1$.
In other words, if $\ell$ divides $\sigma(q_i^{2\beta_i})$, then $q_i$ must belong to $S$.
\end{lemma}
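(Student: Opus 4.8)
We are in the case $\ell = p$ and $k = 2$, so by the discussion preceding the lemma we have $\alpha = 9$ and $N$ possesses at least three prime factors $q_i \equiv 1 \pmod{\ell}$. We want to rule out a prime factor $q_j \mid N$ for which the multiplicative order $d = o_\ell(q_j)$ satisfies $d > 1$ and $d \mid 2\beta_j + 1$. The plan is to suppose such a $q_j$ exists and count, across the primes of $N$, the total power of $\ell$ forced into $\sigma(N) = 2N$, showing it exceeds the available exponent $\alpha + (\text{whatever } \ell^5 \text{ would permit})$, or more precisely exceeds $5k = 10$.

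**The divisibility bookkeeping.** The key tool is Lemma~\ref{lm1}. Since $d = o_\ell(q_j) > 1$ and $d \mid 2\beta_j + 1$, and since $\ell^k = \ell^2 \mid 2\beta_j + 1$ with $\gcd(d,\ell) = 1$ (as $d \mid \ell - 1$), we get that $d\ell^2 \mid 2\beta_j + 1$. First I would observe that $\ell$ divides each of $\Phi_d(q_j)$, $\Phi_{d\ell}(q_j)$, and $\Phi_{d\ell^2}(q_j)$: the first because $d = o_\ell(q_j)$, and the latter two by the $d = \ell^e o_\ell(q_j)$ clause of Lemma~\ref{lm1}. Since each of these cyclotomic values divides $\sigma(q_j^{2\beta_j}) = \prod_{e \mid 2\beta_j+1,\, e>1}\Phi_e(q_j)$, this yields $\ell^3 \mid \sigma(q_j^{2\beta_j})$. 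This is the same mechanism already used in Case-V of Lemma~\ref{A}, which is the model to imitate.

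**Combining with $S$.** On the other hand, by the paragraph preceding this lemma, $S$ contains at least three primes; in fact I would invoke that each $q_i \in S$ contributes $\ell^k = \ell^2$ to $\sigma(q_i^{2\beta_i})$ (shown in the Introduction's setup, since $\ell \mid \Phi_{\ell^j}(q_i)$ for $j = 1,\dots,k$). If the hypothetical $q_j$ does \emph{not} lie in $S$ — which is the relevant case, since $q_j \not\equiv 1 \pmod{\ell}$ forces $d > 1$ — then $q_j$ is a prime distinct from those in $S$, and the $\ell$-contributions multiply. Taking three primes from $S$ together with $q_j$ gives
\begin{equation*}
\ell^{3k} \cdot \ell^3 = \ell^{6}\cdot \ell^{3} = \ell^{9} \mid \prod_{q_i \in S \cup \{q_j\}} \sigma(q_i^{2\beta_i}) \mid N,
\end{equation*}
and pushing to all four primes available (since $\#S \geq 3$, and combined with the at-least-three-in-$S$ count one arranges $\ell^{5k} = \ell^{10} \mid N$) contradicts the standing assumption that $\ell^{5k}\nmid N$. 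The cleanest route mirrors Case-V exactly: show $\ell^{5k} \mid \prod_{q_i \in S \cup\{q_j\}}\sigma(q_i^{2\beta_i})$ and hence $\ell^{5k}\mid N$.

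**The main obstacle.** The delicate point is the exact accounting of exponents: one must be careful that the primitive-prime-factor contributions to $S$ and the order-$d$ contribution from $q_j$ are genuinely independent, i.e.\ that $q_j \notin S$ so that no factor of $\ell$ is double-counted across the product over $S \cup \{q_j\}$. Since $q_j \equiv 1 \pmod{\ell}$ would force $d = 1$, the hypothesis $d > 1$ guarantees $q_j \notin S$, which is exactly what makes the product over the disjoint set $S \cup \{q_j\}$ legitimate. Once disjointness is secured, the exponent count is routine arithmetic, and the second sentence of the lemma (``if $\ell \mid \sigma(q_i^{2\beta_i})$ then $q_i \in S$'') follows as the contrapositive.
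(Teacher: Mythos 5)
Your strategy is the paper's strategy --- force too high a power of $\ell$ into $\prod_i\sigma(q_i^{2\beta_i})\mid\sigma(N)=2N$ --- and your bookkeeping for $q_j$ itself is sound: $d\ell^2\mid 2\beta_j+1$, $\ell$ divides each of $\Phi_d(q_j)$, $\Phi_{d\ell}(q_j)$, $\Phi_{d\ell^2}(q_j)$, hence $\ell^3\mid\sigma(q_j^{2\beta_j})$, and $d>1$ correctly guarantees $q_j\notin S$ so that no factor of $\ell$ is double-counted. But your final count does not close. With only $\#S\geq 3$ (which is all the paragraph preceding the lemma gives you), your inequality yields $\ell^{2\cdot 3+3}=\ell^{9}\mid\prod_{q_i\in S\cup\{q_j\}}\sigma(q_i^{2\beta_i})$, and $\ell^9\mid 2N$ is \emph{not} a contradiction: in this case $p=\ell$, no $q_i$ equals $\ell$ (Lemma \ref{A}(a)), and $\alpha=9$, so the $\ell$-part of $N$ is exactly $\ell^9$. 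You need exponent $10$, and the phrase ``pushing to all four primes available \ldots one arranges $\ell^{5k}=\ell^{10}\mid N$'' names no mechanism producing the missing factor of $\ell$; as written, $3\cdot 2+3=9$ is all you have. (A minor additional slip: the product divides $\sigma(N)=2N$ rather than $N$, though this is harmless since $\ell$ is odd.)

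The missing idea, which is the actual content of the paper's proof, is that the hypothesis on $q_j$ \emph{upgrades} the count of $S$ from $3$ to $4$. Since $\ell$, $\ell^2$, $d\ell$, $d\ell^2$ all divide $2\beta_j+1$, each of $\Phi_{\ell}(q_j)$, $\Phi_{\ell^2}(q_j)$, $\Phi_{d\ell}(q_j)$, $\Phi_{d\ell^2}(q_j)$ divides $\sigma(q_j^{2\beta_j})\mid 2N$, and by Lemma \ref{lm2} each has a primitive prime factor. These are four \emph{distinct} primes, because the order of $q_j$ modulo them takes the four distinct values $\ell$, $\ell^2$, $d\ell$, $d\ell^2$; and each such prime $s$ satisfies $s\equiv 1\pmod{\ell}$, because $o_s(q_j)$ is divisible by $\ell$ and divides $s-1$. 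Hence all four lie in $S$, giving $\#S\geq 4$. Now the count closes exactly as you intended:
\begin{equation*}
\ell^{10}\mid\Phi_{d\ell}(q_j)\Phi_{d\ell^2}(q_j)\prod_{q_i\in S}\sigma(q_i^{2\beta_i})
\ \Big|\ \prod_{q_i\in S\cup\{q_j\}}\sigma(q_i^{2\beta_i})\ \Big|\ 2N,
\end{equation*}
since the four primes of $S$ contribute $\ell^{8}$ and $q_j$ contributes at least $\ell^{2}$ more, contradicting $\ell^{10}\nmid 2N$. Without this step your argument, taken literally, proves nothing, so the gap is genuine rather than cosmetic.
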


\begin{proof}
Assume that $N$ has a prime factor $q_j$ of $N$ such that
the multiplicative order $d$ of $q_j\mathmod{\ell}$ divides $2\beta_j+1$ and $d>1$.
Then, each of $\Phi_{\ell}(q_j)$, $\Phi_{\ell^2}(q_j)$, $\Phi_{d\ell}(q_j)$, $\Phi_{d\ell^2}(q_j)$
has a primitive prime factor.
Hence, $N$ must have at least four prime factors $q_i\equiv 1\mathmod{\ell}$.
Now we see that $\ell^8$ must divide $\prod_{q_i\in S}\sigma(q_i^{2\beta})$ and
\begin{equation}
\ell^{10}\mid\Phi_{d\ell}(q_j)\Phi_{d\ell^2}(q_j)\prod_{q_i\in S}\sigma(q_i^{2\beta})
\mid\prod_{q_i\in S\cup\{q_j\}}\sigma(q_i^{2\beta}),
\end{equation}
which is a contradiction.
Hence, $N$ has no prime factor $q_j$ of $N$ such that the multiplicative order $d$ of $q_j\mathmod{\ell}$
divides $2\beta_j+1$.
\end{proof}

\begin{lemma}\label{c}
$N$ has exactly four prime factors $q_i\equiv 1\mathmod{\ell}$.
\end{lemma}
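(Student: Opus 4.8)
The goal is to show that $N$ has \emph{exactly} four prime factors $q_i \equiv 1 \pmod{\ell}$, i.e.\ that $\#S = 4$. From the setup we already know $\#S \geq 3$: taking a single $q_i \in S$, the two polynomials $\Phi_\ell(q_i)$ and $\Phi_{\ell^2}(q_i)$ each carry a primitive prime factor by Lemma~\ref{lm2}, and these primitive primes are $\equiv 1 \pmod{\ell}$ since their multiplicative order modulo $\ell$ equals $\ell$ or $\ell^2$; combined with \eqref{eq31} we have $3 \leq \#S \leq 4$. So the whole content of the lemma is to \emph{rule out $\#S = 3$}.

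The plan is to assume $\#S = 3$ and derive a contradiction by an $\ell$-adic counting argument, exactly the style used in Lemmas~\ref{A} and~\ref{b}. First I would recall from Lemma~\ref{b} that the only contributions of $\ell$ to $\sigma(N)$ come from the primes in $S$ itself: any $q_j$ with $\ell \mid \sigma(q_j^{2\beta_j})$ already lies in $S$. Since $\sigma(N) = 2N$ and $\ell = p$ appears in $N$ only to the power $\alpha = 9$, we need $\ell^{4k} = \ell^{8}$ to divide $N$; because the $\ell$-part of $\sigma(N)$ is $\ell^{\alpha}\cdot\prod_{q_i \in S}(\ell\text{-part of }\sigma(q_i^{2\beta_i}))$ and $\alpha = 9$, the factor $\prod_{q_i \in S}\sigma(q_i^{2\beta_i})$ must be divisible by $\ell^{5k}=\ell^{10}$, for otherwise $\ell^{5k}$ would fail to divide $N = \sigma(N)/2$. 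The key step is then to bound, for each $q_i \in S$, the exact power of $\ell$ dividing $\sigma(q_i^{2\beta_i})$.

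For a fixed $q_i \in S$ with $o_\ell(q_i) = 1$, Lemma~\ref{lm1} tells us precisely which cyclotomic values $\Phi_d(q_i)$ are divisible by $\ell$: exactly those with $d = \ell^e$, and each such contributes exactly one factor of $\ell$ when $e \geq 1$. Thus writing $\ell^{m_i} \mid\mid (2\beta_i + 1)$, the power of $\ell$ dividing $\sigma(q_i^{2\beta_i}) = \prod_{d \mid (2\beta_i+1),\, d>1}\Phi_d(q_i)$ is exactly $m_i$ (one factor from each of $\Phi_\ell, \Phi_{\ell^2}, \ldots, \Phi_{\ell^{m_i}}$). Since $\ell^k = \ell^2$ divides $2\beta_i + 1$ by hypothesis, $m_i \geq 2$ for every $i$. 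With $\#S = 3$ the total $\ell$-exponent of $\prod_{q_i \in S}\sigma(q_i^{2\beta_i})$ is $\sum m_i \geq 6$; to reach the required $\ell^{10}$ we would need $\sum_{i} m_i \geq 10$ over three indices. I would then argue that such large $m_i$ force too many prime factors into $S$: each $\Phi_{\ell^j}(q_i)$ for $1 \leq j \leq m_i$ has a primitive prime factor lying in $S$ (again by Lemma~\ref{lm2} and $p = \ell$), so $\#S \geq \max_i m_i$. If some $m_i \geq 4$ this already gives $\#S \geq 4$, contradicting $\#S = 3$; hence each $m_i \leq 3$, and the three primes can supply at most $\ell^{9}$, one short of $\ell^{10}$. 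Reconciling the two bounds $\sum m_i \geq 10$ and $m_i \leq 3$ is impossible over three indices, which is the contradiction.

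The main obstacle I anticipate is bookkeeping the exact $\ell$-exponent rather than a mere lower bound: I must be careful that each primitive prime factor produced by the distinct $\Phi_{\ell^j}(q_i)$ (across different $i$ \emph{and} different $j$) is genuinely a \emph{new} element of $S$, so that the count $\#S$ really grows. This follows because a primitive prime factor of $\Phi_{\ell^j}(q_i)$ has multiplicative order $\ell^j$ modulo that prime, so it cannot coincide with a primitive factor of $\Phi_{\ell^{j'}}(q_{i})$ for $j' \neq j$, and Corollary~\ref{cor1} prevents unwanted collisions across different bases; assembling these disjointness facts cleanly is the delicate part. Once the injectivity of the map from cyclotomic indices to elements of $S$ is established, the pigeonhole between the $\ell^{10}$ divisibility requirement and the $\#S \leq 4$ ceiling closes the case $\#S = 3$ and yields $\#S = 4$.
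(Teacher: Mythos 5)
Your reduction to ruling out $\#S=3$ and your use of Lemma \ref{lm1} to identify the $\ell$-adic valuation of $\sigma(q_i^{2\beta_i})$ with the exponent $m_i$ satisfying $\ell^{m_i}\mid\mid(2\beta_i+1)$ both match the paper, but your load-bearing numerical claim is wrong, and wrong in a way that breaks the endgame. You assert that $\prod_{q_i\in S}\sigma(q_i^{2\beta_i})$ must be divisible by $\ell^{5k}=\ell^{10}$ ``for otherwise $\ell^{5k}$ would fail to divide $N$.'' But the standing hypothesis of this entire section is that $\ell^{5k}$ does \emph{not} divide $N$ --- that is the assumption being contradicted at the very end, not a requirement to be met along the way. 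The correct bookkeeping is: since $p=\ell$, $\alpha=9$, and no $q_i$ equals $\ell$, we have $v_\ell(\sigma(N))=v_\ell(2N)=9$, while $\sigma(\ell^9)\equiv 1\pmod{\ell}$, so by Lemma \ref{b} the product $\prod_{q_i\in S}\sigma(q_i^{2\beta_i})$ is divisible by \emph{exactly} $\ell^{9}$, i.e.\ $\sum_{q_i\in S}m_i=9$. (Your formula for the $\ell$-part of $\sigma(N)$ also mistakenly includes a factor $\ell^{\alpha}$; that factor sits in $2N$, on the other side of the equation $\sigma(N)=2N$.)

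With the correct exponent $9$ in place of $10$, your final pigeonhole collapses: your count $\#S\geq\max_i m_i$ only forces each $m_i\leq 3$, and the configuration $m_1=m_2=m_3=3$ satisfies both $\sum_i m_i=9$ and $m_i\leq 3$, so no contradiction results. The missing idea --- and the way the paper closes the case --- is that the count sharpens to $\#S\geq m_i+1$: pigeonhole on $\sum_i m_i=9$ with $\#S\leq 3$ still yields some $m_i\geq 3$, and then $\Phi_{\ell}(q_i)$, $\Phi_{\ell^2}(q_i)$, $\Phi_{\ell^3}(q_i)$ supply three distinct primitive prime factors, all $\equiv 1\pmod{\ell}$, and none of them can be $q_i$ itself because $\Phi_{\ell^j}(q_i)\equiv 1\pmod{q_i}$; together with $q_i$ this puts at least four primes in $S$, contradicting $\#S\leq 3$. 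You came close in your last paragraph when worrying about collisions, but you only tracked disjointness among the primitive primes across indices $j$ and never added $q_i$ itself to the count, which is precisely the one extra prime the argument needs.
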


\begin{proof}
Otherwise, we have $\#S\leq 3$ and, 
since $\ell^9$ divides $\prod_{q_i\in S}\sigma(q_i^{2\beta_i})$ from Lemma \ref{b},
we observe that $\ell^3$ divides some $\sigma(q_i^{2\beta_i})$ with $q_i\equiv 1\mathmod{\ell}$.
Thus, $\ell^3$ divides $2\beta_i+1$ and
$\prod_{j=1}^3 \Phi_{\ell^j}(q_i)\mid \sigma(q_i^{2\beta_i})\mid N$.
Since each $\Phi_{\ell^j}(q_i)$ has a primitive prime factor,
$N$ has at least three prime factors $\equiv 1\mathmod{\ell}$ other than $q_i$
and therefore $\#S\geq 4$.
This is a contradiction.
\end{proof}

\begin{lemma}\label{d}
$2\beta_i+1=\ell^3$ for exactly one $q_i$ in $S$
and $2\beta_i+1=\ell^2$ for other three $q_i$'s in $S$.
\end{lemma}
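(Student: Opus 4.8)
The plan is to combine the perfect‑number equation with a counting argument on primitive prime factors inside $S$. Throughout, let $v_\ell(x)$ denote the exponent of $\ell$ in $x$, i.e. $\ell^{v_\ell(x)}\mid\mid x$. First I would pin down the total $\ell$-adic valuation. Applying $v_\ell$ to $\sigma(N)=2N$, note that $\ell=p$ is odd and coprime to every $q_i$, so $v_\ell(2N)=\alpha=9$. On the other side, $\sigma(p^\alpha)=1+p+\cdots+p^\alpha\equiv 1\mathmod{\ell}$ contributes nothing, and by Lemma \ref{b} only the primes $q_i\in S$ can make $v_\ell(\sigma(q_i^{2\beta_i}))$ positive. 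Hence
\[
\sum_{q_i\in S} v_\ell\bigl(\sigma(q_i^{2\beta_i})\bigr)=9 .
\]

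Next I would evaluate each summand. For $q_i\in S$ we have $o_\ell(q_i)=1$, so Lemma \ref{lm1} gives $\ell\mid\Phi_d(q_i)$ exactly when $d=\ell^e$, with $\ell\mid\mid\Phi_{\ell^e}(q_i)$ for $e\geq1$. Since $\sigma(q_i^{2\beta_i})=\prod_{1<d\mid 2\beta_i+1}\Phi_d(q_i)$, each divisor $\ell^e\mid 2\beta_i+1$ with $e\geq1$ contributes exactly one factor of $\ell$, so $v_\ell(\sigma(q_i^{2\beta_i}))=m_i$, where $m_i:=v_\ell(2\beta_i+1)$. As $\ell^2\mid 2\beta_i+1$ by hypothesis, $m_i\geq2$, and by Lemma \ref{c} the display above becomes $\sum_{q_i\in S}m_i=9$ with exactly four summands, each at least $2$.

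The crux is to upgrade each $m_i$‑bound to the exact statement that $2\beta_i+1$ is a bare power $\ell^{m_i}$ with $m_i\leq3$; this I would obtain from a primitive‑prime count within $S$. Fix $q_i\in S$. For every divisor $d$ of $2\beta_i+1$ divisible by $\ell$, the value $\Phi_d(q_i)$ has a primitive prime factor by Lemma \ref{lm2} (the exceptions cannot occur, since $q_i$ is odd and $d\geq\ell\geq5$). Such a prime $p'$ divides $\sigma(q_i^{2\beta_i})\mid N$ and satisfies $o_{p'}(q_i)=d$, so $\ell\mid d\mid p'-1$ forces $p'\equiv 1\mathmod{\ell}$, whence $p'\in S$; moreover $p'\neq q_i$. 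Primitive primes attached to distinct $d$ are distinct, because they give distinct orders of $q_i$, so the number of divisors $d\mid 2\beta_i+1$ with $\ell\mid d$ is at most $\#(S\setminus\{q_i\})=3$. If $2\beta_i+1$ had a prime factor $s\neq\ell$, then the four divisors $\ell,\ \ell^2,\ \ell s,\ \ell^2 s$ (using $m_i\geq2$) would all be divisible by $\ell$, violating this bound; hence $2\beta_i+1=\ell^{m_i}$. The divisors $\ell,\ldots,\ell^{m_i}$ then produce $m_i$ distinct members of $S\setminus\{q_i\}$, so $m_i\leq3$.

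Finally, four values $m_i\in\{2,3\}$ with $\sum m_i=9$ force exactly one $m_i=3$ and three equal to $2$, i.e. $2\beta_i+1=\ell^3$ for one $q_i\in S$ and $2\beta_i+1=\ell^2$ for the other three, as claimed. I expect the delicate point to be the third step: one must be sure that each primitive prime produced is genuinely a prime factor of $N$ lying in $S$, distinct from $q_i$, and pairwise distinct across the divisors $d$, so that the cardinality bound $3$ really applies and rules out any extra prime factor of $2\beta_i+1$. The valuation bookkeeping in the first two steps is routine once Lemmas \ref{b} and \ref{lm1} are invoked.
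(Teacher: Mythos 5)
Your proposal is correct and follows essentially the same route as the paper: the $\ell$-adic valuation bookkeeping $\sum_{q_i\in S}v_\ell(\sigma(q_i^{2\beta_i}))=\alpha=9$ via Lemmas \ref{b} and \ref{c}, the transfer $v_\ell(\sigma(q_i^{2\beta_i}))=v_\ell(2\beta_i+1)$ via Lemma \ref{lm1}, and the exclusion of a prime $s\neq\ell$ dividing $2\beta_i+1$ by counting primitive prime factors of $\Phi_d(q_i)$ for the four divisors $d=\ell,\ \ell^2,\ s\ell,\ s\ell^2$, which would force $\#S\geq 5$ against \eqref{eq31}. The only (immaterial) difference is the order of steps: the paper pins down the exact valuations first and rules out $s$ last, while you rule out $s$ first and finish with the arithmetic $\sum m_i=9$.
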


\begin{proof}
From Lemmas \ref{b} and \ref{c}, we see that $N$ must have exactly four prime factors $q_i\equiv 1\mathmod{\ell}$
but no prime factor $q_j$ of $N$ such that the multiplicative order $d$ of $q_j\mathmod{\ell}$
divides $2\beta_j+1$ and $d>1$.
This means that $\ell^9\mid\mid\prod_{q_i\in S}\sigma(q_i^{2\beta_i})$ with $\# S\leq 4$.
Hence, $\ell^3$ divides some $\sigma(q_i^{2\beta_i})$ with $q_i\equiv 1\mathmod{\ell}$.

But, since $\ell^2$ divides $2\beta_i+1$ for any $i=1, 2, \ldots, r-1$,
$\ell^2$ must divide $\sigma(q_i^{2\beta_i})$ for each $q_i$ in $S$.
Since $\ell^9\mid\mid\prod_{q_i\in S}\sigma(q_i^{2\beta_i})$,
we must have $\ell^3\mid\mid\sigma(q_i^{2\beta_i})$ for exactly one $i$ in $S$
and $\ell^2\mid\mid\sigma(q_i^{2\beta_i})$ for other three $i$'s in $S$.

Now Lemma \ref{lm1} yields that $\ell^3\mid\mid(2\beta_i+1)$ for exactly one $q_i$'s in $S$
and $\ell^2\mid\mid(2\beta_i+1)$ for other three $q_i$'s in $S$.

If a prime $s$ other than $\ell$ divides $2\beta_i+1$ for some $q_i$ in $S$,
then, each of $\Phi_{\ell}(q_i)$, $\Phi_{\ell^2}(q_i)$, $\Phi_{s\ell}(q_i)$, $\Phi_{s\ell^2}(q_i)$
has a primitive prime factor and we must have $\#S\geq 5$, contrary to \eqref{eq31}.
Thus the lemma holds.
\end{proof}

Now we shall show that several diophantine relations involving primes in $S$ must hold.
We renumber indices so that $S=\{q_1, q_2, q_3, q_4\}$,
$2\beta_i+1=\ell^2$ for $i=1, 2, 3$ and $2\beta_4+1=\ell^3$.
Thus, $\sigma(q_4^{2\beta_4})=\prod_{j=1}^3\Phi_{\ell^j}(q_4)$ must be composed of
only four primes $q_1, q_2, q_3$ and $\ell$.
We can deduce from Corollary \ref{cor1} that $\gcd(\Phi_{\ell^i}(q_4), \Phi_{\ell^j}(q_4))=\ell$
if $i\neq j$.
Hence, we have
\begin{equation}\label{eq33}
\Phi_{\ell}(q_4)=\ell q_a^{h_a}, \Phi_{\ell^2}(q_4)=\ell q_b^{h_b}, \Phi_{\ell^3}(q_4)=\ell q_c^{h_c}
\end{equation}
for some permutation $(a, b, c)$ of $(1, 2, 3)$ and positive integers $h_a, h_b, h_c$.
We can sort $q_i (i=1, 2, 3)$ so that $\Phi_{\ell^3}(q_4)=\ell q_1^{h_1}$.

\begin{lemma}\label{e}
We must have
\begin{equation}\label{eq34}
\Phi_{\ell}(q_1)=\ell q_u^{e_u} q_4^{e_4}, \Phi_{\ell^2}(q_1)=\ell q_v^{e_v}
\end{equation}
or
\begin{equation}\label{eq35}
\Phi_{\ell}(q_1)=\ell q_u^{e_u}, \Phi_{\ell^2}(q_1)=\ell q_v^{e_v} q_4^{e_4}.
\end{equation}
where $(u, v)=(2, 3)$ or $(3, 2)$ and $e_2, e_3, e_4\geq 1$.
\end{lemma}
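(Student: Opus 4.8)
We know from the preceding lemmas that $S=\{q_1,q_2,q_3,q_4\}$ with $2\beta_i+1=\ell^2$ for $i=1,2,3$ and $2\beta_4+1=\ell^3$, and that the only primes dividing any $\sigma(q_i^{2\beta_i})$ other than the $q_j$'s themselves must be $\ell$. The plan is to analyze $\sigma(q_1^{2\beta_1})=\Phi_\ell(q_1)\Phi_{\ell^2}(q_1)$, which (since $2\beta_1+1=\ell^2$) factors as a product of exactly two cyclotomic values, each of which must be built only from the primes $\ell,q_2,q_3,q_4$. The goal is to pin down exactly how $q_4$ distributes across these two factors.

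**Key steps.**
First I would count primitive prime factors. By Lemma~\ref{lm2}, both $\Phi_\ell(q_1)$ and $\Phi_{\ell^2}(q_1)$ possess primitive prime factors, and by Corollary~\ref{cor1} their gcd is exactly $\ell$, so these two primitive factors are distinct primes, each $\equiv 1\mathmod{\ell}$, hence each lying in $S\setminus\{q_1\}=\{q_2,q_3,q_4\}$. Next I would use Lemma~\ref{lm1} to control the exponent of $\ell$: since $\ell^2\mid 2\beta_1+1$ and $\ell\mid\mid\Phi_{\ell^j}(q_1)$ for $j=1,2$, we get $\ell\mid\mid\Phi_\ell(q_1)$ and $\ell\mid\mid\Phi_{\ell^2}(q_1)$, which accounts for the leading $\ell$ in each factor of \eqref{eq34}/\eqref{eq35}. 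The crucial step is to show that $q_4$ divides exactly one of the two factors. I would argue that if $q_4$ divided neither, then $\sigma(q_1^{2\beta_1})$ would be composed only of $\ell,q_2,q_3$ split across two coprime-mod-$\ell$ factors, forcing (together with the analogous constraints on $q_2,q_3$) a closed system among only three primes that collapses to a contradiction with $\#S=4$ or with the primitive-factor count. Conversely, $q_4$ cannot divide both $\Phi_\ell(q_1)$ and $\Phi_{\ell^2}(q_1)$: by Lemma~\ref{lm1}, $q_4\mid\Phi_\ell(q_1)$ means $\ell=o_{q_4}(q_1)$, while $q_4\mid\Phi_{\ell^2}(q_1)$ would then force $\ell^2=q_4^e\cdot\ell$ for some $e\geq 1$, i.e.\ $q_4=\ell$, impossible. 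Having localized $q_4$ to a single factor, the two remaining primes available for the two factors are $\{q_2,q_3\}$, and since each factor needs its own primitive prime from this set, the factor containing $q_4$ carries one of $q_2,q_3$ alongside it while the other factor carries the remaining one — giving precisely the two shapes \eqref{eq34} and \eqref{eq35} with $(u,v)$ a permutation of $(2,3)$.

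**Main obstacle.**
The delicate part is ruling out that $q_4$ appears in \emph{neither} factor of $\sigma(q_1^{2\beta_1})$, since a priori $\Phi_\ell(q_1)$ and $\Phi_{\ell^2}(q_1)$ could be built from $\ell,q_2,q_3$ alone. Here I would lean on the fact that $\Phi_{\ell^3}(q_4)=\ell q_1^{h_1}$ from \eqref{eq33}, which ties $q_1$ tightly to $q_4$; reading this as $o_{q_1}(q_4)=\ell^3$ (or its $\ell$-power variant via Lemma~\ref{lm1}) should produce a reciprocal divisibility forcing $q_4$ into the $\Phi_\ell(q_1)$ or $\Phi_{\ell^2}(q_1)$ factorization. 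Symmetrically, I expect the exponents $e_u,e_v,e_4$ to all be positive simply because each named prime genuinely occurs, with positivity of $e_4$ guaranteed by this reciprocity. The only real bookkeeping is verifying that no third distinct prime can sneak in, which is exactly the content already secured at the end of Lemma~\ref{d}: any prime $s\neq\ell$ dividing $2\beta_1+1$ would inflate $\#S$ beyond $4$, so $2\beta_1+1=\ell^2$ leaves no room for extra cyclotomic factors beyond the two we have named.
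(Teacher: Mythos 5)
Your setup matches the paper's: Corollary \ref{cor1} forces each of $q_2,q_3,q_4$ to divide at most one of $\Phi_\ell(q_1)$, $\Phi_{\ell^2}(q_1)$, each factor carries exactly one power of $\ell$, and each factor's primitive prime lies in $\{q_2,q_3,q_4\}$. But both decisive steps are missing, and both hinge on Lemma \ref{K}, which you never invoke. First, even granting that $q_4$ divides exactly one of the two cyclotomic values, your claim that the factor containing $q_4$ must also contain one of $q_2,q_3$ does not follow: the primitive prime of that factor can be $q_4$ itself, so the configuration $\Phi_{\ell^j}(q_1)=\ell q_4^{e_4}$, $\Phi_{\ell^{3-j}}(q_1)=\ell q_2^{e_2}q_3^{e_3}$ survives every constraint you impose, yet it is not of the form \eqref{eq34} or \eqref{eq35}. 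The paper excludes it by observing that $\Phi_{\ell^j}(q_1)=\ell q_4^{e_4}$ together with $\Phi_{\ell^3}(q_4)=\ell q_1^{h_1}$ from \eqref{eq33} is exactly the forbidden system of Lemma \ref{K} (since $\Phi_{\ell^j}(x)=\Phi_\ell(x^{\ell^{j-1}})$, both equations are of the shape $\Phi_\ell(q_1^{e})=\ell q_4^{f}$, $\Phi_\ell(q_4^{e'})=\ell q_1^{f'}$), which is impossible because $\ell=p$ is odd.

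Second, the ``main obstacle'' you flag --- ruling out $e_4=0$ --- is indeed the heart of the lemma, but the mechanism you sketch does not work: $\Phi_{\ell^3}(q_4)=\ell q_1^{h_1}$ gives $o_{q_1}(q_4)=\ell^3$, and there is no reciprocity converting a statement about $o_{q_1}(q_4)$ into one about $o_{q_4}(q_1)$, i.e.\ into $q_4\mid\Phi_\ell(q_1)\Phi_{\ell^2}(q_1)$; nor does $e_4=0$ conflict with $\#S=4$ or with any primitive-factor count, contrary to what you suggest. The paper's actual argument is an elimination via Lemma \ref{K}: assume $e_4=0$, so WLOG $\Phi_\ell(q_1)=\ell q_2^{e_2}$ and $\Phi_{\ell^2}(q_1)=\ell q_3^{e_3}$. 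Then in the factorization $\sigma(q_2^{2\beta_2})=\Phi_\ell(q_2)\Phi_{\ell^2}(q_2)$, Lemma \ref{K} (played against $\Phi_\ell(q_1)=\ell q_2^{e_2}$, and against \eqref{eq33}) forbids either factor from being $\ell q_1^{f_1}$ or $\ell q_4^{f_4}$; since $q_1,q_3,q_4$ distribute among the two factors with each factor needing a primitive prime, some $\Phi_{\ell^\eta}(q_2)=\ell q_3^{f_3}$. Symmetrically some $\Phi_{\ell^\xi}(q_3)=\ell q_2^{g_2}$, and these two equations form once more the forbidden system of Lemma \ref{K}, a contradiction. Without Kanold's diophantine lemma neither this step nor the previous one goes through, so your outline has a genuine gap rather than an alternative route.
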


\begin{proof}
We can easily see that $\Phi_{\ell}(q_1)\Phi_{\ell^2}(q_1)=\ell^2 \prod_{2\leq j\leq 4, j\neq i}q_j^{e_j}$.
By Lemma \ref{K},
we cannot have $\Phi_{\ell}(q_1)=\ell q_4^{e_4}$ or $\Phi_{\ell^2}(q_1)=\ell q_4^{e_4}$.
Hence, we must have \eqref{eq34} or \eqref{eq35}
where $(u, v)=(2, 3)$ or $(3, 2)$ and $e_2, e_3\geq 1$.

Now all that remains is to show that $e_4\geq 1$.
To this end, assume that $e_4=0$ to the contrary.
Now we may assume that $\Phi_{\ell}(q_1)=\ell q_2^{e_2}, \Phi_{\ell^2}(q_u)=\ell q_3^{e_3}$
without the loss of generality.
By Lemma \ref{K}, neither $\Phi_{\ell}(q_2)=\ell q_1^{f_1}, \ell q_4^{f_4}$ nor $\Phi_{\ell^2}(q_2)=\ell q_1^{f_1}, \ell q_4^{f_4}$ holds.
However, $\Phi_{\ell}(q_2)\Phi_{\ell^2}(q_2)$ is composed of only four primes $q_1, q_3, q_4$ and $\ell$
and, $\gcd(\Phi_{\ell}(q_2), \Phi_{\ell^2}(q_2))=\ell$ by Corollary \ref{cor1}.
Hence, we must have $\Phi_{\ell^\eta}(q_2)=\ell q_3^{f_3}$ for some integers $\eta=1$ or $2$ and $f_3\geq 0$.
Similarly, $\Phi_{\ell^\xi}(q_3)=\ell q_2^{g_2}$ for some integers $\xi=1$ or $2$ and $g_2\geq 0$.
This contradicts Lemma \ref{K}.
Thus, we must have $e_4\geq 1$.
\end{proof}

Now we can renumber indices so that
\begin{equation}\label{eq36}
\Phi_{\ell^\gamma}(q_1)=\ell q_3^{e_3}, \Phi_{\ell^{3-\gamma}}(q_1)=\ell q_2^{e_2} q_4^{e_4}
\end{equation}
with $e_2, e_3, e_4\geq 1$ and $\gamma=1$ or $2$.

\begin{lemma}
$\ell$ divides $e_4$.
\end{lemma}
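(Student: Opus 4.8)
We have from (3.6) that $\Phi_{\ell^\gamma}(q_1)=\ell q_3^{e_3}$ and $\Phi_{\ell^{3-\gamma}}(q_1)=\ell q_2^{e_2}q_4^{e_4}$, and from the sorting preceding Lemma 6 that $\Phi_{\ell^3}(q_4)=\ell q_1^{h_1}$. The plan is to extract an explicit congruence for $q_1$ modulo $\ell^2$ (or $\ell^3$) from the relation $\Phi_{\ell^3}(q_4)=\ell q_1^{h_1}$, and then feed that congruence into the factorization $\Phi_{\ell^{3-\gamma}}(q_1)=\ell q_2^{e_2}q_4^{e_4}$ to force $\ell\mid e_4$.

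**Extracting the key congruence.** First I would use Lemma 1 on $\Phi_{\ell^3}(q_4)=\ell q_1^{h_1}$: since $q_1$ is a primitive prime factor of this cyclotomic value, $o_{q_1}(q_4)=\ell^3$, so $\ell^3\mid q_1-1$; in fact the exact power $q_1^{h_1}=\Phi_{\ell^3}(q_4)/\ell$ records how $q_4$ sits modulo powers of $q_1$. The crucial step, though, is to read the relation the other way: from $\Phi_{\ell^3}(q_4)\equiv 0\mathmod{q_1^{h_1}}$ I would derive the value of $q_4$ (and hence of $q_4^{e_4}$) modulo a power of $\ell$, using the standard expansion $\Phi_{\ell^j}(x)\equiv \ell\,x^{\,?}\mathmod{\text{(higher powers)}}$ near a root. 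More directly, because $\ell^2\mid 2\beta_1+1$ and $q_1\in S$, Lemma 1 gives $o_\ell(q_1)=1$, so $q_1\equiv 1\mathmod\ell$; writing $q_1=1+\ell^a m$ with $\ell\nmid m$ and $a\geq 1$, the point is to pin down $a$ and then evaluate $\Phi_{\ell^{3-\gamma}}(q_1)\mathmod{\ell^{a+1}}$.

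**The core computation.** For $q_1\equiv 1\mathmod{\ell}$ one has the expansion $\Phi_{\ell^j}(q_1)=\sigma_{\ell^{j}}/\sigma_{\ell^{j-1}}$-type identity, and more usefully $\Phi_{\ell^j}(q_1)\equiv \ell\mathmod{\ell\cdot v_\ell(q_1-1)}$ by Lemma 1's refinement ($\ell\mid\mid\Phi_{\ell^j}(q_1)$ once $\ell\mid q_1-1$). I would compute the $\ell$-adic valuation and the residue of $\Phi_{\ell^{3-\gamma}}(q_1)/\ell=q_2^{e_2}q_4^{e_4}$ modulo an appropriate power of $\ell$. Since $q_2,q_4\equiv 1\mathmod\ell$, the right-hand side is $\equiv 1\mathmod\ell$ automatically, so the real content must come from a higher modulus: I would expand $q_4^{e_4}\mathmod{\ell^2}$ using the congruence for $q_4$ obtained from $\Phi_{\ell^3}(q_4)=\ell q_1^{h_1}$, expand $q_2^{e_2}$ similarly, and expand $\Phi_{\ell^{3-\gamma}}(q_1)/\ell$ via the binomial/cyclotomic expansion of $q_1=1+\ell^a m$. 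Matching the two sides modulo $\ell^{a+1}$ should yield a linear relation in which $e_4$ multiplies a unit, forcing $e_4\equiv 0\mathmod\ell$.

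**Main obstacle.** The hard part will be controlling the exact $\ell$-adic valuations $a=v_\ell(q_1-1)$, $v_\ell(q_2-1)$, $v_\ell(q_4-1)$ simultaneously and making sure the congruence I extract is genuinely modulo a high enough power of $\ell$ that $e_4$ appears with a unit coefficient rather than being swallowed by the modulus. In particular I must verify that $\Phi_{\ell^{3-\gamma}}(q_1)$ carries information modulo $\ell^2$ beyond the trivial $\ell\mid\mid$ statement — this is where the precise form of the cyclotomic polynomial at $1+\ell^a m$ matters, and where I would lean on Lemma 1 together with the lifting-the-exponent behaviour of $\sigma(q^{2\beta})$. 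Once the congruence is set up correctly, concluding $\ell\mid e_4$ should be a one-line deduction from comparing the $\ell$-valuations of the exponents of $q_4$ on the two sides.
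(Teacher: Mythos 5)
There is a genuine gap here: your plan reads all the relations $\ell$-adically, but the conclusion $\ell\mid e_4$ cannot be reached that way. Expanding $\Phi_{\ell^{3-\gamma}}(q_1)/\ell=q_2^{e_2}q_4^{e_4}$ with $q_1=1+\ell^a m$ produces, at the crucial modulus, a single congruence of the shape
\[
e_2(q_2-1)+e_4(q_4-1)\equiv \tfrac{\ell-1}{2}\,\ell^{2-\gamma}(q_1-1) \pmod{\ell^{(\text{higher})}},
\]
in which $e_2$ and $e_4$ enter completely symmetrically, with coefficients $q_2-1$ and $q_4-1$ whose $\ell$-adic valuations you do not control. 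If, say, $v_\ell(q_2-1)=v_\ell(q_4-1)$, this constrains only a linear combination $e_2m_2+e_4m_4$ modulo $\ell$ (with $m_2, m_4$ units), and possible cancellation between the two terms can never be excluded by congruences modulo powers of $\ell$ alone; so $e_4$ never ``multiplies a unit'' by itself. More fundamentally, no purely $\ell$-adic computation can distinguish the roles of $q_2$ and $q_4$, yet the lemma treats them asymmetrically. The asymmetry lives modulo $q_1$, not modulo $\ell$: since $q_1\mid\Phi_{\ell^3}(q_4)$, Lemma \ref{lm1} gives that the multiplicative order of $q_4$ modulo $q_1$ is exactly $\ell^3$, and nothing analogous holds for $q_2$ or $q_3$. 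You actually note this order fact at the start (``$o_{q_1}(q_4)=\ell^3$, so $\ell^3\mid q_1-1$''), but then you deliberately ``read the relation the other way'' and discard it — that is precisely the wrong direction.

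For comparison, the paper's proof runs entirely modulo $q_1$. From \eqref{eq36} one gets $\ell q_2^{e_2}q_4^{e_4}\equiv 1\pmod{q_1}$ and $\ell q_3^{e_3}\equiv 1\pmod{q_1}$. The key intermediate step, of which your sketch has no analogue, is to show $q_2^{\ell^2}\equiv q_3^{\ell^2}\equiv 1\pmod{q_1}$: if for instance $q_1\nmid q_3^{\ell^2}-1$, then $\Phi_\ell(q_3)\Phi_{\ell^2}(q_3)$ is composed of $\ell, q_2, q_4$ only, and Corollary \ref{cor1} forces one of the two factors to be exactly $\ell q_4^{f_4}$; together with $\Phi_{\ell^s}(q_4)=\ell q_3^{h_3}$ (for some $s\in\{1,2\}$, from \eqref{eq33}) this contradicts Lemma \ref{K}. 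Combining these with $\ell^{\ell^2}\equiv 1\pmod{q_1}$ and raising $\ell q_2^{e_2}q_4^{e_4}\equiv 1\pmod{q_1}$ to the power $\ell^2$ yields $q_4^{e_4\ell^2}\equiv 1\pmod{q_1}$, whence $\ell^3\mid e_4\ell^2$, i.e.\ $\ell\mid e_4$. So the two missing ideas in your proposal are the order argument modulo $q_1$ (which is what isolates $e_4$ from $e_2$) and the Lemma \ref{K} step that supplies the auxiliary congruences $q_2^{\ell^2}\equiv q_3^{\ell^2}\equiv 1\pmod{q_1}$.
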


\begin{proof}
It follows from \eqref{eq36} that
$q_1^{\ell(\ell-1)}+\cdots +q_1^\ell+1=\ell q_2^{e_2} q_4^{e_4}$
or $q_1^{\ell-1}+\cdots +q_1+1=\ell q_2^{e_2} q_4^{e_4}$.
Hence, we obtain
\begin{equation}\label{eq37}
\ell q_2^{e_2} q_4^{e_4}\equiv 1\mathmod{q_1}.
\end{equation}

We see that $q_3^{\ell^2}\equiv 1\mathmod{q_1}$
since otherwise we must have $\Phi_{\ell}(q_3)=\ell q_4^{f_4}$ or $\Phi_{\ell^2}(q_3)=\ell q_4^{f_4}$,
which is incompatible with \eqref{eq33} by Lemma \ref{K}.
Hence, we obtain
\begin{equation}\label{eq38}
\ell^{\ell^2}\equiv \ell^{\ell^2} q_3^{e_3\ell^2}\equiv 1\mathmod{q_1}.
\end{equation}
Similarly, we see that $q_2^{\ell^2}\equiv 1\mathmod{q_1}$ and, combining \eqref{eq37} and \eqref{eq38},
we obtain
\begin{equation}
q_4^{e_4\ell^2}\equiv (\ell q_2^{e_2} q_4^{e_4})^{\ell^2}\equiv 1\mathmod{q_1}.
\end{equation}
But, since $q_1$ divides $\Phi_{\ell^3}(q_4)$, $\ell$ must divide $e_4$.
\end{proof}

\begin{lemma}\label{g}
$\Phi_\ell(q_1)=\ell q_3^{e_3}$ and $\Phi_{\ell^2}(q_1)=\ell q_2^{e_2} q_4^{e_4}$.
\end{lemma}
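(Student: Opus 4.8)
The plan is to argue by contradiction, assuming $\gamma=2$ in \eqref{eq36}, so that $\Phi_{\ell^2}(q_1)=\ell q_3^{e_3}$ and $\Phi_\ell(q_1)=\ell q_2^{e_2}q_4^{e_4}$, and to derive a contradiction with $\ell$ being odd. Throughout I would record the divisibility relations among $q_1,q_2,q_3,q_4$ as a directed graph: write $q_i\Rightarrow q_j$ when some $\Phi_{\ell^m}(q_i)$ equals $\ell$ times a power of the single prime $q_j$ (a ``clean'' relation), and $q_i\to q_j$ when $q_j$ is merely one of several prime factors of such a value. By Lemma \ref{lm1} every prime factor other than $\ell$ of any $\Phi_{\ell^m}(q_i)$ is primitive, hence lies in $S=\{q_1,q_2,q_3,q_4\}$; by Corollary \ref{cor1} the values $\Phi_{\ell^m}(q_i)$ for distinct $m$ have pairwise $\gcd$ equal to $\ell$, and each has a primitive prime factor by Lemma \ref{lm2}. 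The decisive use of Lemma \ref{K} is that it forbids a $2$-cycle of clean relations: one cannot have both $q_i\Rightarrow q_j$ and $q_j\Rightarrow q_i$, since rewriting $\Phi_{\ell^m}(x)=\Phi_\ell(x^{\ell^{m-1}})$ puts us in the situation of Lemma \ref{K} and forces $\ell=2$.

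First I would read off the clean relations already available: \eqref{eq33} gives $q_4\Rightarrow q_1$, $q_4\Rightarrow q_2$, $q_4\Rightarrow q_3$ (the three factors of $\sigma(q_4^{2\beta_4})$ distribute the three primes bijectively), while the assumption $\gamma=2$ gives $q_1\Rightarrow q_3$. Applying the structural analysis of Lemma \ref{e} to $\sigma(q_3^{2\beta_3})=\Phi_\ell(q_3)\Phi_{\ell^2}(q_3)$ and to $\sigma(q_2^{2\beta_2})$, and using the no-$2$-cycle principle to forbid clean relations back into $q_4$ and back along $q_4\Rightarrow q_i$ and $q_1\Rightarrow q_3$, I would show that $q_3$ and $q_2$ each have exactly one clean factor, forcing $q_3\Rightarrow q_2$ and $q_2\Rightarrow q_1$ together with the non-clean factors $\Phi_{\ell^{b_3}}(q_3)=\ell q_1^{f_1}q_4^{f_4}$ and $\Phi_{\ell^{b_2}}(q_2)=\ell q_3^{d_3}q_4^{d_4}$.

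At this point the clean relations form the $3$-cycle $q_1\Rightarrow q_3\Rightarrow q_2\Rightarrow q_1$ together with the source $q_4\Rightarrow q_1,q_2,q_3$, a configuration that contains no $2$-cycle and therefore cannot be eliminated by Lemma \ref{K} alone. Dispatching this residual configuration is the step I expect to be the main obstacle. Here I would abandon the purely combinatorial argument and turn to the exact exponential diophantine equations, bringing in the two facts not yet exploited: $q_1\equiv 1\pmod{\ell^3}$ (from $q_1\mid\Phi_{\ell^3}(q_4)$ being primitive) and $\ell\mid e_4$ from the preceding lemma, so that $q_4^{e_4}$ with $e_4\ge\ell$ is forced into the small factor $\Phi_\ell(q_1)$ of degree $\ell-1$. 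Comparing sizes and $\ell$-adic valuations around the cycle $\Phi_{\ell^2}(q_1)=\ell q_3^{e_3}$, $\Phi_{\ell^{a_3}}(q_3)=\ell q_2^{g_2}$, $\Phi_{\ell^{a_2}}(q_2)=\ell q_1^{c_1}$, and feeding the resulting constraints back into $\Phi_{\ell^3}(q_4)=\ell q_1^{h_1}$, I expect to reach a numerical impossibility, in the spirit of the other diophantine steps of the paper. Once $\gamma=2$ is excluded, \eqref{eq36} leaves only $\gamma=1$, which is exactly the assertion $\Phi_\ell(q_1)=\ell q_3^{e_3}$ and $\Phi_{\ell^2}(q_1)=\ell q_2^{e_2}q_4^{e_4}$.
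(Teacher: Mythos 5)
Your framing is right---the lemma amounts to excluding $\gamma=2$ in \eqref{eq36}, and you correctly set up a contradiction from $\Phi_\ell(q_1)=\ell q_2^{e_2}q_4^{e_4}$, $\Phi_{\ell^2}(q_1)=\ell q_3^{e_3}$---but the proof is missing at precisely the point you yourself flag as ``the main obstacle.'' Everything before that point is consistent combinatorics (and your no-$2$-cycle reading of Lemma \ref{K} is indeed how the paper uses it elsewhere), but the impossibility of the residual configuration is left as ``I expect to reach a numerical impossibility,'' which is the entire content of the lemma. The paper needs none of the graph apparatus, no analysis of $\sigma(q_2^{2\beta_2})$ or $\sigma(q_3^{2\beta_3})$, and no $3$-cycle: it uses only the two relations you mention in passing at the end. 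Since $\Phi_{\ell^3}(q_4)=1+q_4^{\ell^2}+\cdots+q_4^{\ell^2(\ell-1)}$, the relation $\Phi_{\ell^3}(q_4)=\ell q_1^{h_1}$ gives $\ell q_1^{h_1}\equiv 1\pmod{q_4^{\ell^2}}$; the contrary assumption gives $q_4^{e_4}\mid\Phi_\ell(q_1)\mid q_1^{\ell}-1$, i.e.\ $q_1^{\ell}\equiv 1\pmod{q_4^{e_4}}$. Raising the first congruence to the $\ell$-th power and using the second to eliminate $q_1$ yields $\ell^{\ell}\equiv 1\pmod{q_4^{\min\{e_4,\ell^2\}}}$; since $\ell\mid e_4$ by the preceding lemma, $\min\{e_4,\ell^2\}\geq\ell$, so $q_4^{\ell}$ divides $\ell^{\ell}-1$. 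This is absurd because $q_4\in S$ forces $q_4\equiv 1\pmod{\ell}$, hence $q_4>\ell$ and $q_4^{\ell}>\ell^{\ell}-1$.

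Beyond incompleteness, the route you sketch for closing the gap points in the wrong direction. A magnitude comparison around the clean cycle $q_1\Rightarrow q_3\Rightarrow q_2\Rightarrow q_1$ cannot by itself produce a contradiction: when $h_1=1$, the relation $\Phi_{\ell^3}(q_4)=\ell q_1^{h_1}$ makes $q_1$ of size roughly $q_4^{\ell^2(\ell-1)}/\ell$, so the ``small'' factor $\Phi_\ell(q_1)\approx q_1^{\ell-1}$ comfortably accommodates any $q_4^{e_4}$ with $e_4\leq 2\beta_4=\ell^3-1$, and the $\ell$-adic information is already exhausted by $\ell\mid\mid\Phi_{\ell^j}(q_i)$ from Lemma \ref{lm1}. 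What actually kills the configuration is a congruence modulo a high power of $q_4$: eliminating $q_1$ between the two relations so that the small fixed number $\ell^{\ell}-1$ is forced to carry the factor $q_4^{\ell}$. That elimination trick---reused in the proofs of Lemmas \ref{h} and \ref{i} and in the endgame of the paper---is the idea your proposal lacks; without it you have only described the offending configuration, not refuted it.
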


\begin{proof}
We begin by observing that
\begin{equation}
\ell q_1^{e_1}\equiv 1\mathmod{q_4^{\ell^2}}
\end{equation}
from the assumption that $\Phi_{\ell^3}(q_4)=\ell q_1^{h_1}$.
Hence, if $\Phi_\ell(q_1)=\ell q_2^{e_2} q_4^{e_4}$, then we must have
\begin{equation}
\ell^\ell\equiv (\ell q_1^{e_1})^\ell\equiv 1\mathmod{q_4^{\min\{e_4, \ell^2\}}}.
\end{equation}
Since $e_4\geq \ell$ by the previous lemma, 
$q_4^\ell$ must divide $\ell^\ell-1$.
But, since $q_4\in S$, we have $q_4>\ell$ and $q_4^\ell>\ell^\ell-1$.
This is a contradiction.
\end{proof}

\begin{lemma}\label{h}
$q_4^\ell$ divides $\Phi_{\ell^2}(\ell)$ and $q_4\equiv 1\mathmod{\ell^2}$.
\end{lemma}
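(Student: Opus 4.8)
The plan is to exploit the two relations already in hand, namely $\Phi_{\ell^3}(q_4)=\ell q_1^{h_1}$ from \eqref{eq33} and $\Phi_{\ell^2}(q_1)=\ell q_2^{e_2}q_4^{e_4}$ from Lemma \ref{g}, together with the divisibility $\ell\mid e_4$ proved just above, and to transfer the factor $q_4^\ell$ from $\Phi_{\ell^2}(q_1)$ over to $\Phi_{\ell^2}(\ell)$ by replacing $q_1$ with $\ell$ modulo a suitable power of $q_4$. Since $\Phi_{\ell^2}(q_1)=\Phi_\ell(q_1^\ell)$ and $q_4\nmid q_1^\ell-1$ (because $o_{q_4}(q_1)=\ell^2$ by Lemma \ref{lm1}, as $q_4>\ell$ forces $q_4\nmid\ell^2$), the relation $q_4^{e_4}\mid\mid\Phi_{\ell^2}(q_1)$ with $e_4\geq\ell$ shows $q_4^\ell\mid q_1^{\ell^2}-1$, that is, $q_1^{\ell^2}\equiv 1\mathmod{q_4^\ell}$.

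First I would record, exactly as in the proof of Lemma \ref{g}, that $\Phi_{\ell^3}(q_4)\equiv 1\mathmod{q_4^{\ell^2}}$ forces $\ell q_1^{h_1}\equiv 1\mathmod{q_4^{\ell^2}}$, hence $\ell\equiv q_1^{-h_1}\mathmod{q_4^{\ell^2}}$. Raising this congruence to the power $\ell^2$ and invoking $q_1^{\ell^2}\equiv 1\mathmod{q_4^\ell}$ yields $\ell^{\ell^2}\equiv (q_1^{\ell^2})^{-h_1}\equiv 1\mathmod{q_4^\ell}$. In particular $q_4\mid\ell^{\ell^2}-1$, so the order $o_{q_4}(\ell)$ is a power of $\ell$ dividing $\ell^2$, that is, $o_{q_4}(\ell)\in\{1,\ell,\ell^2\}$.

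The crux is to show $o_{q_4}(\ell)=\ell^2$ rather than $1$ or $\ell$. Here I would use that $(\mathbb{Z}/q_4^\ell\mathbb{Z})^\times$ is cyclic of order $q_4^{\ell-1}(q_4-1)$ and that the kernel of reduction modulo $q_4$ has order $q_4^{\ell-1}$, which is coprime to $\ell$; consequently, whenever the order of $\ell$ modulo $q_4^\ell$ is a power of $\ell$ it coincides with the order of $\ell$ modulo $q_4$. Thus if $o_{q_4}(\ell)\leq\ell$ we would obtain $\ell^\ell\equiv 1\mathmod{q_4^\ell}$, whence $q_4^\ell\mid\ell^\ell-1$; but $q_4\in S$ gives $q_4>\ell$ and hence $q_4^\ell>\ell^\ell-1>0$, a contradiction of exactly the type used in Lemma \ref{g}. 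This forces $o_{q_4}(\ell)=\ell^2$. I expect this exclusion of the small orders to be the only delicate point of the argument.

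Finally, knowing $o_{q_4}(\ell)=\ell^2$ gives $q_4\nmid\ell^\ell-1$, so that $v_{q_4}(\Phi_{\ell^2}(\ell))=v_{q_4}(\ell^{\ell^2}-1)-v_{q_4}(\ell^\ell-1)=v_{q_4}(\ell^{\ell^2}-1)\geq\ell$ by the congruence $\ell^{\ell^2}\equiv 1\mathmod{q_4^\ell}$ obtained above; this is precisely $q_4^\ell\mid\Phi_{\ell^2}(\ell)$. Moreover $q_4\mid\Phi_{\ell^2}(\ell)$ together with $o_{q_4}(\ell)=\ell^2\mid q_4-1$ yields $q_4\equiv 1\mathmod{\ell^2}$, which completes the proof. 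Everything apart from the order computation is a routine manipulation of cyclotomic values via Lemma \ref{lm1}.
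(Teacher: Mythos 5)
Your proof is correct, and its first half is exactly the paper's: both derive the key congruence $\ell^{\ell^2}\equiv(\ell q_1^{h_1})^{\ell^2}\equiv 1\pmod{q_4^{\ell}}$ from $\Phi_{\ell^3}(q_4)=\ell q_1^{h_1}\equiv 1\pmod{q_4^{\ell^2}}$ together with $q_4^{e_4}\mid\Phi_{\ell^2}(q_1)\mid q_1^{\ell^2}-1$ and $e_4\geq\ell$. (Your preliminary remark that $q_4\nmid q_1^{\ell}-1$ is not needed for this step; the plain divisibility $q_4^{\ell}\mid q_1^{\ell^2}-1$ suffices.) Where you diverge is the endgame. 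The paper factors $\ell^{\ell^2}-1=(\ell^{\ell}-1)\Phi_{\ell^2}(\ell)$, quotes the coprimality $\gcd(\ell^{\ell}-1,\Phi_{\ell^2}(\ell))=1$ (attributed there to Lemma \ref{lm2}, though it really follows from Lemma \ref{lm1} or Corollary \ref{cor1}), so that the prime power $q_4^{\ell}$ must lie wholly in one of the two factors, rules out the factor $\ell^{\ell}-1$ by the size bound $q_4>\ell$, and then gets $q_4\equiv 1\pmod{\ell^2}$ from Lemma \ref{lm1}. You instead run an order-lifting argument: since the order of $\ell$ modulo $q_4^{\ell}$ divides $\ell^2$ while the kernel of reduction modulo $q_4$ has order $q_4^{\ell-1}$, coprime to $\ell$, that order coincides with $o_{q_4}(\ell)$; were it $1$ or $\ell$, you would get $q_4^{\ell}\mid\ell^{\ell}-1$, contradicting the same size bound, so $o_{q_4}(\ell)=\ell^2$, whence $q_4\nmid\ell^{\ell}-1$, the entire $q_4$-power of $\ell^{\ell^2}-1$ lands in $\Phi_{\ell^2}(\ell)$, and $\ell^2\mid q_4-1$. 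The two mechanisms are interchangeable here and rest on the same inequality $q_4^{\ell}>\ell^{\ell}-1$: the paper's is shorter because coprimality of distinct cyclotomic values is already in its toolkit, while yours is self-contained elementary group theory that never invokes Lemma \ref{lm2} and pins down $o_{q_4}(\ell)=\ell^2$ explicitly, from which both assertions of the lemma drop out at once.
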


\begin{proof}
Since we have just shown that $\Phi_{\ell^2}(q_1)=\ell q_2^{e_2} q_4^{e_4}$,
we obtain
\begin{equation}
\ell^{\ell^2}\equiv (\ell q_1^{h_1})^{\ell^2}\equiv 1\mathmod{q_4^{\ell}},
\end{equation}
where we used that $\min\{e_4, \ell^2\}\geq\ell$ as in the previous lemma.
By Lemma \ref{lm2}, $\gcd(\ell^\ell-1, \Phi_{\ell^2}(\ell))=1$
and therefore $q_4^\ell$ divides $\ell^\ell-1$ or $\Phi_{\ell^2}(\ell)$.
But, since $q_4>\ell$, $q_4^\ell$ cannot divide $\ell^\ell-1$.
Hence, $q_4^\ell$ divides $\Phi_{\ell^2}(\ell)$ and Lemma \ref{lm1} yields that $q_4\equiv 1\mathmod{\ell^2}$.
\end{proof}

\begin{lemma}\label{i}
$\Phi_\ell(q_2)=\ell q_1^{f_1}$.
\end{lemma}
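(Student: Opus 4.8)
The plan is to carry out, for the prime $q_2$, the same kind of analysis that \eqref{eq33}, \eqref{eq36} and Lemmas~\ref{e}--\ref{h} performed for $q_1$ and $q_4$. Recall that $q_2$ is the distinguished prime of $S$ singled out by \eqref{eq36}: it is the one with $q_2\mid\Phi_{\ell^2}(q_1)$, so that the multiplicative order of $q_1$ modulo $q_2$ is $\ell^2$. I would first record the coarse shape of $\sigma(q_2^{2\beta_2})=\Phi_\ell(q_2)\Phi_{\ell^2}(q_2)$. Every non-$\ell$ prime factor of this product has multiplicative order $\ell$ or $\ell^2$ with respect to $q_2$, hence lies in $S\setminus\{q_2\}=\{q_1,q_3,q_4\}$ by Lemmas~\ref{lm1}, \ref{b} and~\ref{d}; by Lemma~\ref{lm1} we have $\ell\mid\mid\Phi_\ell(q_2)$ and $\ell\mid\mid\Phi_{\ell^2}(q_2)$, and by Corollary~\ref{cor1} the two factors are coprime apart from $\ell$. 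Since each factor carries a primitive prime factor by Lemma~\ref{lm2}, each is $\ell$ times a nonempty product of powers of $q_1,q_3,q_4$ with disjoint prime supports.

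Next I would exploit Lemma~\ref{K} through the identity $\Phi_{\ell^m}(x)=\Phi_\ell(x^{\ell^{m-1}})$, which turns any single-prime relation $\Phi_{\ell^m}(q_i)=\ell q_j^{\ast}$ into the form required by that lemma. As $\ell\geq 5>2$, the exceptional case of Lemma~\ref{K} cannot occur, so no two primes can satisfy reciprocal single-prime relations. Feeding in the relations already established --- $\Phi_\ell(q_1)=\ell q_3^{e_3}$ from \eqref{eq36} together with the three single-prime relations of \eqref{eq33}, which link $q_4$ to each of $q_1,q_2,q_3$ --- this rules out $\Phi_\ell(q_2)=\ell q_4^{\ast}$ and $\Phi_{\ell^2}(q_2)=\ell q_4^{\ast}$. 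Running the argument of Lemma~\ref{e} with $q_2$ in the role of $q_1$ (and with $q_1,q_3$ as the remaining free primes) then shows that all three of $q_1,q_3,q_4$ actually occur and that $\sigma(q_2^{2\beta_2})$ splits into one single-prime factor $\ell q_s^{\ast}$, with $q_s\in\{q_1,q_3\}$, and one two-prime factor in which $q_4$ is paired with the remaining one of $q_1,q_3$. The divisibility argument proving $\ell\mid e_4$ above, applied now to $q_2$, further gives that the exponent of $q_4$ in that two-prime factor is a multiple of $\ell$.

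It remains to show that the single-prime factor is $\Phi_\ell(q_2)$ itself and that its prime is $q_1$; this is where the main difficulty lies. Excluding the configurations in which $q_4$ occupies the low factor $\Phi_\ell(q_2)$ should follow by mirroring Lemma~\ref{g}, using whichever of $\Phi_\ell(q_4),\Phi_{\ell^2}(q_4)$ equals $\ell q_2^{\ast}$ to obtain a congruence $\ell q_2^{\ast}\equiv 1\mathmod{q_4^{\ell}}$, and then eliminating $q_2$ after raising to the $\ell$-th power to force $q_4^{\ell}\mid\ell^{\ell}-1$, which is impossible since $q_4>\ell$; here a short preliminary step may be needed to ensure that the link from $q_4$ to $q_2$ sits at level $\ell^2$, so that the modulus $q_4^{\ell}$ is available. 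The decisive and most delicate point is to distinguish the single prime being $q_1$ from its being $q_3$, equivalently to prove that the order of $q_1$ modulo $q_2$ is $\ell$ rather than $\ell^2$. Lemma~\ref{K} is powerless here, since $q_1$ and $q_2$ are joined only through the two-prime factor $\Phi_{\ell^2}(q_1)=\ell q_2^{e_2}q_4^{e_4}$ and never through a single-prime relation. I expect this to require the quantitative congruence and size estimates of Lemmas~\ref{g} and~\ref{h}: combining $\ell q_1^{h_1}\equiv 1\mathmod{q_4^{\ell^2}}$ and $q_1^{\ell^2}\equiv 1\mathmod{q_4^{\ell}}$ coming from $\Phi_{\ell^3}(q_4)=\ell q_1^{h_1}$ with $\Phi_{\ell^2}(q_2)\equiv 1\mathmod{q_2^{\ell}}$ and the divisibility $\ell\mid e_4$ should, in the configuration where $q_1$ lies in the high factor $\Phi_{\ell^2}(q_2)$ beside $q_4$, again produce a relation of the form $q_4^{\ell}\mid\ell^{\ell}-1$ or $q_2^{\ell}\mid\ell^{\ell}-1$, contradicting $q_4>\ell$ and $q_2>\ell$. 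Once this configuration is excluded, the surviving possibility is exactly $\Phi_\ell(q_2)=\ell q_1^{f_1}$, as claimed.
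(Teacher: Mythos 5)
Your opening stages (the shape of $\sigma(q_2^{2\beta_2})=\Phi_\ell(q_2)\Phi_{\ell^2}(q_2)$, and the exclusion of the single-prime relations $\Phi_{\ell^s}(q_2)=\ell q_4^{\ast}$ by playing Lemma \ref{K} against \eqref{eq33}) are correct and in the paper's spirit, but two supporting claims fail, though repairably. The claim that the ``$\ell\mid e_4$'' argument transfers to $q_2$ is unfounded: that argument needs $o_{q_1}(q_4)=\ell^3$, which comes from $q_1\mid\Phi_{\ell^3}(q_4)$, whereas $o_{q_2}(q_4)\in\{\ell,\ell^2\}$ by \eqref{eq33}, so the divisibility it produces is vacuous; you therefore get neither a multiple-of-$\ell$ exponent on $q_4$ nor the modulus $q_4^{\ell}$. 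Likewise there is no way to ``ensure'' that the $q_4$--$q_2$ link sits at level $\ell^2$. Neither is needed: if $q_4\mid\Phi_\ell(q_2)$, then $q_2^{\ell}\equiv 1\mathmod{q_4}$ and $\ell q_2^{h_2}\equiv 1\mathmod{q_4}$ at whichever level, hence $\ell^{\ell}\equiv 1\mathmod{q_4}$; this already contradicts Lemma \ref{h}, since $q_4\mid\Phi_{\ell^2}(\ell)$ forces $o_{q_4}(\ell)=\ell^2$. That is exactly the paper's argument, and it uses only the modulus $q_4$.

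The fatal gap is your last step. You assert that Lemma \ref{K} is ``powerless'' to exclude the configuration $\Phi_\ell(q_2)=\ell q_3^{f_3}$, $\Phi_{\ell^2}(q_2)=\ell q_1^{f_1}q_4^{f_4}$, and propose congruence and size estimates instead; both assertions are wrong. The congruences cannot work: in this configuration every relevant order equals $\ell^2$ exactly ($o_{q_4}(\ell)$ by Lemma \ref{h}; $o_{q_4}(q_1)$ and $o_{q_2}(q_1)$ by Lemma \ref{g}; $o_{q_4}(q_2)$ by the assumption $q_4\mid\Phi_{\ell^2}(q_2)$), so raising the available congruences to $\ell^2$-th powers only reproduces statements such as $\ell^{\ell^2}\equiv 1\mathmod{q_4^{\ell}}$ --- which is precisely the content of Lemma \ref{h}, not a contradiction --- and $\ell^{\ell^2}\equiv 1\mathmod{q_2}$, which nothing forbids. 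The relations $q_4^{\ell}\mid\ell^{\ell}-1$ or $q_2^{\ell}\mid\ell^{\ell}-1$ you hope for would require some order to divide $\ell$, and no step forces that. The paper kills this configuration with Lemma \ref{K} after all, applied not to the pair $(q_1,q_2)$ but through $q_3$: the non-$\ell$ prime factors of $\Phi_\ell(q_3)\Phi_{\ell^2}(q_3)$ lie in $\{q_1,q_2,q_4\}$, the two factors have disjoint non-$\ell$ supports by Corollary \ref{cor1}, and each has a primitive prime factor, so some $\Phi_{\ell^s}(q_3)$ must equal $\ell q_j^{g_j}$ for a single $j\in\{1,2,4\}$; each choice is reciprocal to an existing single-prime relation --- $\Phi_\ell(q_1)=\ell q_3^{e_3}$, the assumed $\Phi_\ell(q_2)=\ell q_3^{f_3}$, and $\Phi_{\ell^j}(q_4)=\ell q_3^{h_3}$ from \eqref{eq33} --- so Lemma \ref{K} excludes all three, a contradiction. (The same indirect use of Lemma \ref{K} also disposes of the case $f_1,f_3\geq 1$, by forcing $\Phi_{\ell^2}(q_2)=\ell q_4^{f_4}$.) Without this idea your plan has no mechanism to decide between $q_1$ and $q_3$, which is the heart of the lemma.
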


\begin{proof}
From \eqref{eq33}, we see that
\begin{equation}
\ell q_2^{h_2}\equiv 1\mathmod{q_4^\ell}.
\end{equation}
If $q_4$ divides $\Phi_\ell(q_2)$, then,
\begin{equation}
\ell^\ell\equiv (\ell q_2^{h_2})^\ell\equiv 1\mathmod{q_4}.
\end{equation}
Hence, $q_4$ must divide $\ell^\ell-1$, contrary to the previous lemma.

Thus, we must have $\Phi_\ell(q_2)=\ell q_1^{f_1} q_3^{f_3}$.
If both $f_1, f_3\geq 1$, then we must have $\Phi_{\ell^2}(q_2)=\ell q_4^{f_4}$,
which is impossible by Lemma \ref{K}.
If $\Phi_\ell(q_2)=\ell q_3^{f_3}$, then,
by Lemma \ref{K}, we can have neither $\Phi_{\ell^s}(q_3)=\ell q_1^{g_1}$, $\ell q_2^{g_2}$ nor $\ell q_4^{g_4}$
for $s=1, 2$, which is a contradiction.
Thus we must have $\Phi_\ell(q_2)=\ell q_1^{f_1}$.
\end{proof}

\begin{lemma}\label{j}
$h_1\geq \ell$.
\end{lemma}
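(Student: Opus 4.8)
The plan is to convert the inequality into a congruence modulo $q_1^\ell$ and then extract it from the relations already in hand. Since $q_1>\ell$, Lemma~\ref{lm1} gives $o_{q_1}(q_4)=\ell^3$, so $q_1$ is a primitive prime factor of $\Phi_{\ell^3}(q_4)$ and $q_1\equiv 1\mathmod{\ell^3}$; as $q_1$ divides neither $\Phi_\ell(q_4)$ nor $\Phi_{\ell^2}(q_4)$, its entire contribution to $q_4^{\ell^3}-1$ lies in $\Phi_{\ell^3}(q_4)$, whence $h_1=v_{q_1}(\Phi_{\ell^3}(q_4))=v_{q_1}(q_4^{\ell^3}-1)$. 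Therefore proving $h_1\geq\ell$ is equivalent to proving
\[ q_4^{\ell^3}\equiv 1\mathmod{q_1^\ell}, \]
that is, that $q_4$ has order prime to $q_1$ in $(\mathbb{Z}/q_1^\ell\mathbb{Z})^\times$.

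The mechanism I would use is Lemma~\ref{g}. Because $\Phi_{\ell^2}(X)=\Phi_\ell(X^\ell)\equiv 1\mathmod{X^\ell}$, reducing $\Phi_{\ell^2}(q_1)=\ell q_2^{e_2}q_4^{e_4}$ modulo $q_1^\ell$ yields the key congruence $\ell q_2^{e_2}q_4^{e_4}\equiv 1\mathmod{q_1^\ell}$. Writing $e_4=\ell e_4'$, which is legitimate because $\ell\mid e_4$, and noting that elementary size bounds ($q_4^{e_4}<q_1^{\ell^2}$ and $q_1>\ell^3$) force $\gcd(e_4',q_1)=1$, I would raise this congruence to the power $\ell^2$, so that the $q_4$-factor becomes $(q_4^{\ell^3})^{e_4'}$. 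If one can show that the remaining factor $(\ell q_2^{e_2})^{\ell^2}$ is congruent to $1$ modulo $q_1^\ell$, then $(q_4^{\ell^3})^{e_4'}\equiv 1\mathmod{q_1^\ell}$; since $q_4^{\ell^3}$ is a principal unit (it is $\equiv 1\mathmod{q_1}$) and $e_4'$ is prime to $q_1$, this forces $q_4^{\ell^3}\equiv 1\mathmod{q_1^\ell}$, as required.

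The crux --- and the place where I expect the numerous elementary steps of this case to be spent --- is that the key congruence is circular on its own: it merely asserts $(\ell q_2^{e_2})^{\ell^2}\equiv(q_4^{\ell^3})^{-e_4'}$, so genuinely new, $\Phi_{\ell^2}(q_1)$-independent information about $\ell q_2^{e_2}$ modulo $q_1^\ell$ is indispensable. I would draw this from the companion relations: Lemma~\ref{i} gives $\Phi_\ell(q_2)=\ell q_1^{f_1}$, hence $o_{q_1}(q_2)=\ell$ and, by lifting the exponent, $v_{q_1}(q_2^{\ell^2}-1)=f_1$; the exact identity $\ell q_3^{e_3}=\Phi_\ell(q_1)=(q_1^\ell-1)/(q_1-1)$ constrains $\ell$ modulo powers of $q_1$; and Lemma~\ref{h} furnishes $q_4\equiv 1\mathmod{\ell^2}$ and $q_4^\ell\mid\Phi_{\ell^2}(\ell)$. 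The task is to combine these to pin down the $q_1$-parts of $\ell$ and of $q_2^{e_2}$ precisely enough to conclude $(\ell q_2^{e_2})^{\ell^2}\equiv 1\mathmod{q_1^\ell}$; I expect this bookkeeping of $q_1$-adic valuations to be the hard, computation-heavy part. As an independent sanity check one can use the global relation $v_{q_1}(\sigma(N))=2\beta_1=\ell^2-1$, which records how the total power of $q_1$ is distributed among the factors $\sigma(q_i^{2\beta_i})$ and should corroborate that $h_1$ cannot lie in the range $1\le h_1<\ell$.
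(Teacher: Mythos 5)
Your opening reductions are all correct as far as they go: since $o_{q_1}(q_4)=\ell^3$, indeed $h_1=v_{q_1}(q_4^{\ell^3}-1)$, so the lemma is equivalent to $q_4^{\ell^3}\equiv 1\mathmod{q_1^\ell}$; since $\Phi_{\ell^2}(q_1)=1+q_1^\ell+\cdots+q_1^{\ell(\ell-1)}$, Lemma \ref{g} does give $\ell q_2^{e_2}q_4^{e_4}\equiv 1\mathmod{q_1^\ell}$; and your order-theoretic step (a unit $\equiv 1\mathmod{q_1}$ has $q_1$-power order modulo $q_1^\ell$, while $\gcd(e_4',q_1)=1$) would validly finish the proof \emph{if} one knew $(\ell q_2^{e_2})^{\ell^2}\equiv 1\mathmod{q_1^\ell}$. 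But that is exactly where the proposal stops, and, as you yourself concede, in the congruence $(\ell q_2^{e_2})^{\ell^2}(q_4^{\ell^3})^{e_4'}\equiv 1\mathmod{q_1^\ell}$ the assertion ``$(\ell q_2^{e_2})^{\ell^2}\equiv 1$'' is precisely equivalent to ``$(q_4^{\ell^3})^{e_4'}\equiv 1$'', i.e.\ to the lemma itself. So the entire content of Lemma \ref{j} is concentrated in the step you defer. The ingredients you list for closing it (the valuation $v_{q_1}(q_2^{\ell^2}-1)=f_1$ from Lemma \ref{i}, the identity $\Phi_\ell(q_1)=\ell q_3^{e_3}$, Lemma \ref{h}, the relation $v_{q_1}(\sigma(N))=\ell^2-1$) are never combined into an argument, and there is no indication they suffice: none of them controls the class of $\ell q_2^{e_2}$ modulo $q_1^\ell$. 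This is a genuine gap, not bookkeeping left to the reader.

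The paper proves the lemma by an entirely different mechanism: magnitude estimates rather than congruences. Assume $h_1\leq\ell-1$. Then $\ell q_1^{h_1}=\Phi_{\ell^3}(q_4)>q_4^{\ell^2(\ell-1)}$ forces $q_1>q_4^{\ell^2}/2$; substituting this into $\Phi_{\ell^2}(q_1)=\ell q_2^{e_2}q_4^{e_4}$ (Lemma \ref{g}) and bounding the exponents (in particular $e_2\leq 2\beta_2=\ell^2-1$) gives $q_2>q_4^{\ell(\ell-2)}/2$; substituting that into $\Phi_{\ell^2}(q_2)=\ell q_3^{f_3}q_4^{f_4}$, which is available thanks to Lemma \ref{i}, gives $q_3>q_4^{\ell(\ell-4)}/2$. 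But by \eqref{eq33}, $\Phi_\ell(q_4)$ equals $\ell q_2^{h_2}$ or $\ell q_3^{h_3}$, so (using $\ell=p\equiv 1\mathmod{4}$, hence $\ell\geq 5$) one gets $2q_4^{\ell-1}>\Phi_\ell(q_4)>\ell q_4^{\ell(\ell-4)}/2\geq\ell q_4^{\ell}/2$, whence $q_4<4/\ell<1$, absurd. In other words, the ``new information'' your scheme lacks is extracted in the paper not from $q_1$-adic bookkeeping but from sizes: a small $h_1$ makes $q_1$, hence $q_2$, hence $q_3$, so enormous relative to $q_4$ that the factorization \eqref{eq33} becomes numerically impossible. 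To rescue your approach you would need a source of information about $\ell q_2^{e_2}\mathmod{q_1^\ell}$ independent of $\Phi_{\ell^2}(q_1)$, and none of the relations established up to this point supplies one.
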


\begin{proof}
Assume that $h_1\leq \ell-1$.
Then, we can easily see that
\begin{equation}
q_1>\left(\frac{q_4^{\ell^2(\ell-1)}}{\ell}\right)^{1/(\ell-1)}>\frac{q_4^{\ell^2}}{2}.
\end{equation}
By Lemma \ref{g},
we must have $\Phi_{\ell^2}(q_1)=\ell q_2^{e_2} q_4^{e_4}$.
Hence, observing that $\ell<2^{\ell-1}$ and $\ell(\ell-1)>(\ell+1)(\ell-2)$, we obtain
\begin{equation}
q_2^{e_2}>\frac{q_1^{\ell(\ell-1)}}{\ell q_4^{e_4}}\geq \frac{(q_4^{\ell^2}/2)^{\ell(\ell-1)}}{\ell q_4^{\ell^2(\ell-1)}}=\frac{q_4^{\ell^2(\ell-1)^2}}{2^{\ell(\ell-1)}\ell}
>\frac{q_4^{\ell(\ell-1)(\ell+1)(\ell-2)}}{2^{\ell^2-1}}
\end{equation}
and, noting that $e_2\leq 2\beta_2=\ell^2-1$,
\begin{equation}
q_2>\frac{q_4^{\ell(\ell-2)}}{2}.
\end{equation}

Now, Lemma \ref{i} implies that
\begin{equation}
\Phi_{\ell^2}(q_2)=\ell q_3^{f_3} q_4^{f_4}
\end{equation}
for some nonnegative integers $f_3\leq 2\beta_3=\ell^2-1$ and $f_4\leq 2\beta_4=\ell^3-1$.
Hence,
\begin{equation}
q_3^{f_3}>\frac{q_2^{\ell(\ell-1)}}{\ell q_4^{e_4}}\geq \frac{(q_4^{\ell(\ell-2)}/2)^{\ell(\ell-1)}}{\ell q_4^{\ell^2(\ell-1)}}=\frac{q_4^{\ell^2(\ell-1)(\ell-3)}}{2^{\ell(\ell-1)}\ell}
>\frac{q_4^{\ell(\ell-1)(\ell+1)(\ell-4)}}{2^{\ell^2-1}}
\end{equation}
and
\begin{equation}
q_3>\frac{q_4^{\ell(\ell-4)}}{2}.
\end{equation}

But, since $\Phi_\ell(q_4)=\ell q_2^{h_2}$ or $\ell q_3^{h_3}$ by \eqref{eq33}, we have
\begin{equation}
2q_4^{\ell-1}>\Phi_\ell(q_4)>\frac{\ell q_4^{\ell(\ell-4)}}{2}\geq \frac{\ell q_4^{\ell}}{2}
\end{equation}
and $q_4<4/\ell<1$, which is impossible.
\end{proof}

Now we shall complete the proof of our theorem.
If $f_1\leq \ell-4$, then
\begin{equation}
q_2\leq \ell^{1/(\ell-1)}q_1^{(\ell-4)/(\ell-1)}
\end{equation}
and
\begin{equation}\label{eq311}
q_4^{e_4}=\frac{\Phi_{\ell^2}(q_1)}{\ell q_2^{e_2}}>\frac{q_1^{\ell(\ell-1)-(\ell-4)(\ell+1)}}{\ell^{\ell+2}}=\frac{q_1^{2\ell+4}}{\ell^{\ell+2}}>q_1^{\ell+2}.
\end{equation}
Since 
\begin{equation}
q_4^{\ell^2(\ell-1)}<\Phi_{\ell^3}(q_4)\leq \ell q_1^{\ell^2-1},
\end{equation}
we have
\begin{equation}\label{eq312}
q_1>\frac{q_4^{\ell^2/(\ell+1)}}{\ell^{1/(\ell^2-1)}}>q_4^{\ell-1}.
\end{equation}
Combining \eqref{eq311} and \eqref{eq312}, we obtain
\begin{equation}
q_4^{e_4}>q_1^{\ell+2}>q_4^{(\ell-1)(\ell+2)}>q_4^{\ell^2}.
\end{equation}
It follows from Lemma \ref{g} that
$q_4^{\ell^2}\mid\Phi_{\ell^2}(q_1)\mid (q_1^{\ell^2}-1)$ and therefore
\begin{equation}
\ell^{\ell^2}\equiv (\ell q_1^{h_1})^{\ell^2}\equiv 1\mathmod{q_4^{\ell^2}}.
\end{equation}
Hence, we must have $q_4<\ell$, which is a contradiction.

If $\ell\geq 13$ and $f_1\geq \ell-3$, then, since $q_2$ divides $\Phi_{\ell^2}(q_1)$,
we have
\begin{equation}
\ell^{\ell^2}\equiv (\ell q_2^{e_2} q_4^{e_4})^{\ell^2}\equiv 1\mathmod{q_1^{\min\{f_1, h_1\}}}
\end{equation}
and, by Lemma \ref{j}, $q_1^{\ell-3}<\ell^{\ell^2}$.
We observe that
\begin{equation}
q_4^{\ell^2(\ell-1)}<\Phi_{\ell^3}(q_4)=\ell q_1^{h_1}\leq \ell q_1^{2\beta_1}=\ell q_1^{\ell^2-1}
\end{equation}
and therefore
\begin{equation}
q_4<\ell^{\frac{1}{\ell^2(\ell-1)}} q_1^{\frac{\ell^2}{\ell+1}}
\leq\ell^{\frac{1}{\ell^2(\ell-1)}+\frac{\ell^2}{(\ell+1)(\ell-3)}}.
\end{equation}
Since $\ell\geq 13$, we must have $q_4<\ell^{1+15/154+1/2028}<2\ell$ and $q_4\not\equiv 1\mathmod{\ell}$,
which is a contradiction again.

Now the only possibility left is $p=\ell=5$.
Then $(5+1)/2=3$ and $\Phi_5(3)\Phi_{25}(3)=11^2\times 8951\times 391151$ must divide $N$.
Hence, $3001\times 3221\mid \Phi_5(11)\Phi_{25}(11)\mid N$ and therefore
$S$ must contain at least five primes $11$, $3001$, $3221$, $8951$, and $391151$, contrary to \eqref{eq31}.
This completes the proof of our theorem.

{}
\end{document}